\documentclass[11pt,reqno]{amsart}
\numberwithin{equation}{section}
\usepackage[dvipsnames]{xcolor}
\usepackage{tikz}
\usetikzlibrary{snakes}
\usetikzlibrary{patterns}
\usepackage{amsmath,amsfonts,amssymb,amsbsy,amsopn,amsthm,eucal, mathrsfs}
\usepackage{enumerate, enumitem, esint}
\usepackage{dsfont}
\usepackage{graphicx}   
\usepackage[bookmarks=true, pdfstartview=FitH, colorlinks=true,citecolor=blue, linkcolor=blue]{hyperref}
\usepackage{accents}
\usepackage{color}

\usepackage{cite}

\usepackage{etoolbox}

\makeatletter
\patchcmd{\@tocline}
{\hfil}
{\dotfill}
{}{}

\renewcommand{\l@subsection}{\@tocline{2}{0pt}{2.5pc}{5pc}{}}
\makeatother

\newcommand{\Ric}{{\rm Ric}}

\newcommand{\diam}{{\rm diam}}

\newcommand{\Alex}{\text{Alex\,}}

\newcommand{\Alexnk}{\text{Alex}^n(\kappa)}

\newcommand{\dN}{\mathds{N}}

\newcommand{\dR}{\mathds{R}}

\newcommand{\cH}{\mathcal{H}}

\newcommand{\cK}{\mathcal{K}}

\newcommand{\cN}{\mathcal{N}}

\newcommand{\cS}{\mathcal{S}}

\newcommand{\Vol}[1]{\text{Vol}\left(#1\right)}

\newcommand{\dsp}{\displaystyle}

\newcommand{\RNum}[1]{\uppercase\expandafter{\romannumeral #1\relax}}

\newtheorem*{Theorem A}{Theorem A}
\newtheorem*{Theorem B}{Theorem B}
\newtheorem*{Theorem C}{Theorem C}
\newtheorem*{Theorem D}{Theorem D}
\newtheorem*{Theorem E}{Theorem E}

\newtheorem{theorem}{Theorem}[section]

\newtheorem{lemma}[theorem]{Lemma}
\newtheorem{corollary}[theorem]{Corollary}
\newtheorem{conjecture}[theorem]{Conjecture}
\newtheorem{question}[theorem]{Question}
\newtheorem{definition}[theorem]{Definition}
\newtheorem{example}[theorem]{Example}
\newtheorem{remark}[theorem]{Remark}

\makeatletter
\def\smalloverbrace#1{\mathop{\vbox{\m@th\ialign{##\crcr\noalign{\kern3\p@}%
  \tiny\downbracefill\crcr\noalign{\kern3\p@\nointerlineskip}%
  $\hfil\displaystyle{#1}\hfil$\crcr}}}\limits}
\makeatother

\begin{document}

\title{Bounding Curvature Measure on Manifolds with Singularities}

\author[N.~Li]{Nan Li}

\address{Department of Mathematics, Capital Normal University, Beijing, P.R.C.}
\address{Department of Mathematics, The City University of New York - NYC College of
	Technology, 300 Jay St., Brooklyn, NY 11201}
\email{NLi@citytech.cuny.edu}
\date{\today}

\thanks{Nan Li was partially supported by PSC-CUNY \#66371‐00 54.}

\maketitle

\begin{center}
\end{center}

\begin{abstract}
  Let $X$ be an $n$-dimensional Alexandrov space with curvature $\ge -1$, and let $\eta > 0$. Define $\mathcal{S}^{k}_\eta(X)$ as the set of $(k,\eta)$-singular points in $X$ whose tangent cones are $\eta$-away from  splitting off $\mathbb{R}^{k+1}$ isometrically. For a point $p \in X$, assume that $M = B_2(p) \setminus (\mathcal{S}^{n-2}_\eta(X) \cup \partial X)$ is a smooth manifold equipped with the Riemannian metric induced by $X$. We prove that the integral of the scalar curvature of $M$ over $B_1(p)$ is bounded from above by a constant depending only on $n$ and $\eta$. As a special case, this extends Petrunin's bounded curvature integral result for complete manifolds with lower sectional curvature bound to the setting of open manifolds and smooth manifolds with boundary, provided that these manifolds are Alexandrov spaces.
\end{abstract}

\tableofcontents

\section{Introduction}\label{s:intro}

There are three motivations for this work. The first is the relation to the following well-known problem proposed by Yau, which asks whether a lower bound on the Ricci curvature is sufficient to control the integral of scalar curvature.

\begin{question}[Problem 9, \cite{Yau92}]\label{c:Yau}
	Let $(M, g)$ be an $n$-dimensional complete Riemannian manifold with Ricci curvature $\Ric_M\ge 0$ and $p\in M$. Does it hold that 
	\begin{align}
		\lim_{R\to\infty}R^{-(n-2)}\int_{B_R(p)}scal \,\operatorname d {vol}_{g}<\infty?
	\end{align}
\end{question}

In general, it was conjectured that if $\Ric_M\ge -1$, then 
\begin{align}
	\int_{B_1(p)}scal \,\operatorname d {vol}_{g}<C(n).
\end{align} 

This conjecture is confirmed if the manifold has a lower sectional curvature bound, while the Ricci case remains open. 

\begin{theorem}[Petrunin \cite{Pet09}]\label{t:Pet.X}
	For any $n$-dimensional complete Riemannian manifolds $(M, g)$ with sectional curvature $\sec_M\ge -1$, we have an a prior $L^1$-bound for the scalar curvature
	$\dsp\int_{B_1}scal \,\operatorname d {vol}_{g}\le c(n)$.
\end{theorem}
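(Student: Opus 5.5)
We plan to follow Petrunin's original strategy: express the scalar curvature integral as a limit of averaged "second variation" quantities of distance functions, and control those via comparison geometry. Since $scal = \sum_{i\neq j}\sec(e_i,e_j)$ and $\sec\ge -1$, it suffices to bound the positive part. The key identity is that for a smooth function $f$ on $M$, integrating the Bochner-type formula
\begin{align}
\tfrac12\Delta|\nabla f|^2 = |\nabla^2 f|^2 + \langle \nabla f,\nabla\Delta f\rangle + \Ric(\nabla f,\nabla f)
\end{align}
against a cutoff lets us control $\int \Ric(\nabla f,\nabla f)$ by quantities involving $\Delta f$ and the Hessian. Averaging over a family of functions $f$ whose gradients are equidistributed turns $\Ric(\nabla f,\nabla f)$ into a multiple of $scal$.

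Concretely, we will take $f$ to be distance-type functions. In a small neighbourhood of $p$, pick a generic point $q$ and use a smoothed version of the function $\phi_q = \tfrac12 \operatorname{dist}_q^2$, or better, the linear-algebra device of Petrunin: embed $M$ locally through $n+N$ distance coordinates and average over $q$ in a ball of radius comparable to $1$. The steps, in order, are as follows. (1) Reduce to a scale where $B_1(p)$ is covered by boundedly many (depending on $n$) balls on which there exist strainers/distance coordinates, using Bishop--Gromov volume comparison under $\sec\ge-1$. (2) For each $q$ in a set of definite volume, the function $\operatorname{dist}_q$ is semiconcave with $\nabla^2 \operatorname{dist}_q\le \coth(\operatorname{dist}_q)$ by Toponogov comparison, so $\Delta \operatorname{dist}_q^2$ is bounded above, and its distributional Laplacian has total variation on $B_1$ bounded by a constant $c(n)$ (upper bound plus integration by parts against a cutoff, whose boundary term is controlled by $|\nabla \operatorname{dist}_q|\le 1$ and the volume bound). (3) Integrate the Bochner formula with a cutoff $\chi$ supported in $B_2$; the term $\int \chi|\nabla^2 f|^2$ is controlled because the Hessian is bounded above by comparison and its trace has bounded $L^1$ norm, which bounds the negative part in $L^1$. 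The mixed term $\int \chi\langle\nabla f,\nabla\Delta f\rangle$ is handled by integrating by parts to $-\int \Delta f\,\mathrm{div}(\chi\nabla f)$, which is bounded via the same Hessian bounds. (4) Average over $q$ in a ball: the directions $\nabla\operatorname{dist}_q(x)$ are spread out over the unit sphere with density bounded below (again by comparison), so $\int_q \Ric(\nabla f_q,\nabla f_q)\,dq\ge c\cdot scal(x) - C$, yielding $\int_{B_1} scal\le c(n)$.

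The main obstacle is step (3)--(4): distance functions are not smooth (cut loci), so the Bochner formula must be applied in a distributional sense, and the mixed term $\langle\nabla f,\nabla\Delta f\rangle$ involves a derivative of a measure. I expect to first try mollifying via the exponential-map-free approach of Petrunin, replacing $\operatorname{dist}_q$ by its average over $q$ in a small ball, which is $C^{1,1}$-like with controlled Hessian, and to rely on the fact that the singular part of $\Delta \operatorname{dist}_q$ is nonpositive, so it only helps the inequality in the needed direction. Checking that the sign works out consistently in every term is where the argument is most delicate.
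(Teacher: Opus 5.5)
There is a genuine gap, and it is not a matter of checking signs. It sits in step (3). Carrying out your integration by parts, using $\mathrm{div}(\chi\nabla f)=\chi\Delta f+\langle\nabla\chi,\nabla f\rangle$, gives
\[
\int\chi\,\Ric(\nabla f,\nabla f)=\int\chi\bigl[(\Delta f)^2-|\nabla^2 f|^2\bigr]+\int\Delta f\,\langle\nabla\chi,\nabla f\rangle+\tfrac12\int|\nabla f|^2\,\Delta\chi .
\]
The term $-\int\chi|\nabla^2 f|^2$ that you try to control already has the favorable sign. The harmful term is $\int\chi(\Delta f)^2$, and it is produced by your treatment of the mixed term.

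The integrand $(\Delta f)^2-|\nabla^2 f|^2=2\sigma_2(\nabla^2 f)$ is quadratic in the Hessian, while semiconcavity bounds the eigenvalues only from above. Two very negative eigenvalues make $\sigma_2$ large and positive, and an $L^1$ bound on $\Delta f$ (or on the negative part of $\nabla^2 f$) gives no control of $\int\chi(\Delta f)^2$. For $f=d_q$, the singular part of $\Delta f$ on the cut locus makes $(\Delta f)^2$ meaningless and mollifications blow up. The nonpositivity of that singular part is of no use inside a square. More fundamentally, for smooth $f$ the integrated Bochner formula is an identity, so bounding $\int\chi\,\sigma_2(\nabla^2 f)$ from above is equivalent to bounding $\int\chi\Ric(\nabla f,\nabla f)$ from above. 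Without new input, the step is circular.

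Steps (1) and (4) also fail uniformly. Bishop--Gromov gives packing at a fixed scale, but strainers have no uniform scale. Near the tip of a cone smoothed at scale $\varepsilon$, strained balls must have radius comparable to the distance to the tip, so covering $B_1$ needs on the order of $\log(1/\varepsilon)$ of them. Under collapse, the directions $\nabla d_q(x)$ are far from equidistributed. On $S^2_\varepsilon\times\mathbb{R}^{n-2}$ with $n\ge 3$ (where $S^2_\varepsilon$ is the round sphere of radius $\varepsilon$), the average of $\Ric(\nabla d_q,\nabla d_q)$ over $q\in B_1(x)$ is $O(\varepsilon^{-1})$, while $scal\equiv 2\varepsilon^{-2}$. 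So the inequality in (4) cannot hold with constants independent of the manifold.

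In the paper, the statement is the smooth, boundaryless case of Theorem A. It is proved via Theorem 3.1 with $m=0$, where $K^-\equiv1$ and Bishop--Gromov bounds $\mathrm{Vol}(B_2)$. Two ideas missing from your plan make it work.

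First, a multiscale covering. The good-scale annulus covering uses $N(n,\epsilon)$ annuli with arbitrarily small inner radii, which handles collapse and conical points without any uniform strainer scale.

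Second, the proof never bounds $\sigma_2$ of a Hessian directly. Bochner is applied to the unit normal $u=\nabla g/|\nabla g|$ of the level sets $L_t$ of an almost-distance function $g$. This gives $\int\Ric(u,u)=\int G$ plus boundary terms linear in the mean curvature $H$, where $G=\sum_{i\neq j}k_ik_j$. When $|\nabla f|=1$, $G$ is exactly your $(\Delta f)^2-|\nabla^2 f|^2$. The Gauss equation then gives $G\le scal_{L_t}+n^2K_Y^-$. This turns the quadratic term into the intrinsic scalar curvature of a lower-dimensional level set, which is controlled by induction on dimension over corner spaces. The base case is Gauss--Bonnet, where $G=0$.

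The remaining linear terms need only one-sided control:
\begin{enumerate}
\item $k_i\le 6n/t$, from semiconcavity; this is where acute frames are used;
\item $\int_{L_{b'}}H\ge -c\,b^{k-2}$, at a level $b'\in[b,2b]$ chosen via the first variation of $\mathrm{Vol}(L_t)$ and the bound $\mathrm{Vol}(L_b)\le c\,b^{k-1}$.
\end{enumerate}
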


A detailed direct proof of this result, avoiding an argument by contradiction, was provided in \cite{Li25}. As the main result, we generalize Theorem \ref{t:Pet.X} to open manifolds and Alexandrov spaces, where Toponogov's comparison theorem holds for any geodesic triangle \cite{BGP}. Let $X \in \Alexnk$ be an $n$-dimensional Alexandrov space with curvature $\ge\kappa$. We let $\cS^{k}(X)$ denote the set of $k$-singular points of $X$, whose tangent cones do not split off $\dR^{k+1}$ isometrically. Furthermore, let $\cS^{k}_\eta(X)$ denote the set of $(k,\eta)$-singular points, whose tangent cones are $\eta$-away from splitting off $\dR^{k+1} $isometrically. It is well known that $\cS^{n-1}(X)\setminus \cS^{n-2}(X)\subseteq\partial X$, and the Hausdorff dimension satisfies $\dim_{\cH}(\cS(X))\le n-1$. For more detailed definitions and Hausdorff measure estimates regarding these quantitative singular sets, we refer the reader to \cite{LiNab20}. The following is our main result. 

\begin{Theorem A}\hypertarget{thm:A}{}
	Let $(X, p, d_X)\in\Alex^n(-1)$ and $\eta>0$. Suppose that $B_2(p)\setminus(\cS^{n-2}_\eta(X)\cup\partial X)$ is a smooth Riemannian manifold, equipped with Riemannian metric $g$ induced by $d_X$. Then 
	\begin{align}
		\int_{B_1(p)}scal \,\operatorname d {vol}_{g}\le C(n, \eta).
		\label{e:thmA-1}
	\end{align}
	In particular, the above estimate holds on smooth manifold with boundary and smooth open manifold if they (or their metric completions) are Alexandrov spaces. 
\end{Theorem A}

\begin{remark}
	\hyperlink{thm:A}{Theorem A} is new for open manifold or manifold with boundary. This statement does not follow directly from Theorem \ref{t:Pet.X} or its proof. Technically, this is because open manifolds lack the controlled covering structure used in \cite{Pet09} or \cite{Li25}. In fact, if an open manifold $M$ is Alexandrov, then one can apply \hyperlink{thm:A}{Theorem A} on its metric completion and obtain the desired result. 
\end{remark}

The following example shows that \hyperlink{thm:A}{Theorem A} doesn't hold without the (global) Alexandrov condition.

\begin{example}\label{r:eg.bdy}
    Let $H_r$ denote a $2$-dimensional sphere of radius $r \ge 1$, from which the closed $(r/10)$-balls centered at the north and south poles have been removed. Let the open manifold $M$ be the universal cover of $H_r$, and let $X$ be the metric completion of $M$. Note that though $M$ is a smooth open manifold satisfying $\sec_M \ge \frac{1}{r^2} \ge 1$, neither $M$ nor $X$ is an Alexandrov space. In this case, for any point $p \in M$ or $X$, the following holds:
    $$ \displaystyle\int_{B_1(p)} \operatorname{scal} \, \operatorname d {vol}_{g} \ge r^{-1} \to \infty \quad \text{as } r \to 0. $$
    
    As demonstrated in our proof, the boundary of an Alexandrov space has controlled bending, a property that is essential for bounding the curvature integral.
\end{example}

\begin{remark} 
	The dependence on $\eta$ cannot be removed in our approach. This is partly because our proof relies on a careful analysis of the distribution of the singular sets, as they contribute to the boundary terms of the integrals. This analysis can be carried out for the singular set $\cS_\eta(X)$ if the fineness parameter $\epsilon$ of the covering is chosen to be much smaller than $\eta$. However, the smaller $\epsilon$ is chosen, the greater the number of elements needed in the covering. We believe this dependence can be removed, but it would rely on a finer estimate of the total singularity. In particular, one must carefully address the case in which the singular set $\cS(X)$ is dense (see Example (2) in \cite{OtsShi94}).
\end{remark}

During the course of proving \hyperlink{thm:A}{Theorem A}, the author also tried to show that the integral of the intrinsic scalar curvature of $\partial X$ is bounded. It seems that the same type of argument should work, but unfortunately, it fails due to some technique reasons.

\begin{conjecture}\label{c:bdy-curv}
	Let $(X, d_X)\in\Alex^n(-1)$ and $p\in\partial X$. Suppose that $B_2(p)\cap\partial X\setminus\cS^{n-3}(X)$ is a smooth Riemannian manifold, equipped with Riemannian metric $g_{\partial X}$ induced by $d_X$. Then 
	\begin{align}
		\int_{B_1(p)\cap\,\partial X}scal_{\partial X} \,\operatorname d {vol}_{g_{\partial X} }\le C(n).
	\end{align}
\end{conjecture}

It is conjectured that the boundary of an Alexandrov space $X$ is also an Alexandrov space. Currently, the only known case where this holds is when $X$ is smooth \cite{AKP08}. Conjecture \ref{c:bdy-curv} can be viewed as support for this boundary conjecture. Furthermore, Example \ref{r:eg.bdy} provides evidence for Conjecture \ref{c:bdy-curv} by demonstrating that a (global) Alexandrov condition is necessary to bound the curvature integral.

The second motivation is the following conjecture proposed by Naber. Suppose that $(M_i,g_i)$ Gromov-Hausdorff converges to a limit space $(X, d)$, and $scal_i$ has a uniform integral bound as stated in Theorem \ref{t:Pet.X}. Passing to a sub-sequence, as a signed measure, $scal_i \,\operatorname d {vol}_{g_i}$ converges to a measure on $X$. 

\begin{conjecture}[Conjecture 2.20 in \cite{Nab20}]\label{c:measure conv}
	Let $(M_i, g_i)$ be a sequence of $n$-dimensional manifolds with lower Ricci or sectional curvature bounds. If $M_i$ Gromov-Hausdorff converge to $X$ without collapsing, then as measure, 
	$$ scal_i \,\operatorname d {vol}_{g_i} \to \mu = {\operatorname R}\,\operatorname d \cH^n + \Phi\,\operatorname d \cH^{n-1} +\theta \,\operatorname d \cH^{n-2},
	$$
	where ${\operatorname R}$, $\Phi$ and $\theta$ are locally $L^1$-functions with respect to $\cH^n$, $\cH^{n-1}$ and $\cH^{n-2}$ respectively. The function $\Phi(x)$ is supported on an $(n-1)$-rectifiable subset. The function $\theta(x)$ is supported on the top stratum of the $(n-2)$-rectifiable singular set $\cS(X)=\cS^{n-2}(X)$.
\end{conjecture}

For the sectional curvature case, some progress has been made on this conjecture, particularly concerning the singular sets of co-dimension one and two \cite{LiNab20}, \cite{Li23} and \cite{LPet22}. 

\begin{theorem}[\cite{LPet22}]\label{t:cont}
	Let $M_i$ be a sequence of $n$-dimensional Riemannian manifold with $\sec_{M_i}\ge\kappa$. Suppose that $M_i$ Gromov-Hausdorff converge to $X$ without collapsing. Then the curvature
	tensors of $M_i$ weakly converge to a measure-valued tensor $\mu$ on $X$ which satisfies the following properties.
	\begin{enumerate}
		\renewcommand{\labelenumi}{(\roman{enumi})}
		\item If $p\in X$ is smooth, then $\mu(p)$ is equivalent to ${\operatorname R}(p)\,\operatorname d \cH^n$, where ${\operatorname R(p)}$ is the curvature tensor induced by the metric of $X$. 
		\item If the tangent cone $T_p(X)=C(\Sigma_p)\times \mathds R^{n-2}$, then $\mu(p)$ is equivalent to $(2\pi-\beta(p))\,\operatorname d \cH^{n-2}$, where $\beta(p)$ is the cone angle of $C(\Sigma_p)$. 
		\item The measure $\mu$ vanishes on any subset of $X$ with a vanishing $(n-2)$-dimensional Hausdorff measure. In particular, If $T_p(X)$ doesn't splits off $\mathds R^{n-2}$ isometrically, then $\mu(p)=0$. 
	\end{enumerate}
\end{theorem}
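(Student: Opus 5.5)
The proof would run in three stages: a uniform mass bound, compactness, and identification of the limit in the three regimes. \textbf{Mass bound.} Since $\sec_{M_i}\ge\kappa$, the curvature operator is bounded below on each orthonormal frame. Hence $|\operatorname{Rm}_i|\le C(n)\bigl(scal_i + |\kappa|\bigr)$ pointwise. Rescaling Theorem \ref{t:Pet.X} to balls of radius $r\le 1$ (the rescaled metric $r^{-2}g_i$ has $\sec\ge \kappa r^2\ge-|\kappa|$) gives
\begin{align}
\int_{B_r(x)}|\operatorname{Rm}_i|\,\operatorname d vol_{g_i}\le C(n,\kappa)\,r^{n-2}
\label{e:plan-mass}
\end{align}
for every $x\in M_i$ and $r\le 1$. \textbf{Compactness.} By \eqref{e:plan-mass}, the tensor-valued measures $\operatorname{Rm}_i\,\operatorname d vol_{g_i}$ have locally uniformly bounded total variation. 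Pushing them forward through Gromov--Hausdorff approximations and extracting a diagonal subsequence yields a weak limit $\mu$ on $X$. In the regular part this should be tested against continuous 2-forms; globally it can be tested through distance-coordinate frames, which converge by noncollapsing and Toponogov.

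\textbf{Item (iii)} follows directly from \eqref{e:plan-mass}, which passes to the limit as $|\mu|(B_r(x))\le C r^{n-2}$. If $\cH^{n-2}(A)=0$, cover $A$ by balls with $\sum r_j^{n-2}<\varepsilon$; this gives $|\mu|(A)\le C\varepsilon$. The set of points whose tangent cone does not split off $\dR^{n-2}$ is $\cS^{n-3}(X)$, which has Hausdorff dimension $\le n-3$ and hence is $\cH^{n-2}$-null. So $\mu$ vanishes there.

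\textbf{Item (ii).} Near $p$ with $T_pX=C(\Sigma_p)\times\dR^{n-2}$, I would take $\delta$-splitting maps $u=(u_1,\dots,u_{n-2})$ built from distance functions to far-away points. These lift to $M_i$, and by the coarea formula almost every fiber $u^{-1}(t)\cap B_r$ is a surface $F_t$ that is Gromov--Hausdorff close to a small disc in the 2-dimensional cone $C(\Sigma_p)$. Apply Gauss--Bonnet on $F_t$: the sectional curvature in the $F_t$-direction, integrated over $F_t$, equals $2\pi\chi(F_t)-\int_{\partial F_t}k_g$, corrected by second fundamental form terms of $F_t$. Those correction terms are controlled by the Hessians of the $u_j$, whose $L^1$ norms tend to $0$ by semiconcavity together with the almost-splitting property. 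The boundary turning $\int_{\partial F_t}k_g$ converges to the length of $\partial F_t$ divided by its radius, i.e. to $\beta(p)$. Integrating in $t$ then gives $\mu(B_r(p))/\cH^{n-2}\approx 2\pi-\beta(p)$ in the planar component, and $\approx0$ in the others.

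\textbf{Item (i)} should be handled similarly, using 2-dimensional slices in every direction. At a smooth point the Hessians of the coordinate functions converge, so the slice computation recovers the pointwise sectional curvatures of $X$. The density ratio at scale $r^n$ then gives $\operatorname R(p)$. Since the identification is independent of the subsequence, the full sequence converges.

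The main obstacle is the identification in (ii), and to a lesser extent (i). One must show that the error terms from the non-flatness of the slices vanish in the limit: only $L^1$ Hessian control of the splitting maps is available, not pointwise control. I would first try to bound these errors by Bochner-type integral estimates on the $u_j$, which hold in the $\sec\ge\kappa$ setting via semiconcavity.
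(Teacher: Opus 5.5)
The paper gives no proof of Theorem \ref{t:cont}; it is quoted from \cite{LPet22} as background, so your proposal has to be judged on its own.

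The first part holds. Under $\sec\ge\kappa$ you have $|\operatorname{Rm}_i|\le C(n)(scal_i+|\kappa|)$. Theorem \ref{t:Pet.X} then gives $|\mu|(B_r(x))\le C(n,\kappa)r^{n-2}$, after rescaling and, when $|\kappa|>1$, covering by $C(n,\kappa)$ balls of radius $|\kappa|^{-1/2}$, since the theorem is stated only for $\sec\ge-1$. Item (iii) follows exactly as you say from $\dim_{\cH}\cS^{n-3}(X)\le n-3$. But (i) and (ii) are the actual content of the theorem, and there your argument has a genuine gap.

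\textbf{Item (ii).} You claim the second fundamental form corrections are controlled by the $L^1$ norms of $\operatorname{Hess}u_j$. They are not. With $G_{jk}=\langle\nabla u_j,\nabla u_k\rangle$ and $e_1,e_2$ orthonormal in $TF_t$, the Gauss equation gives
\begin{align*}
K_{F_t}-\sec(e_1\wedge e_2)=\sum_{j,k}G^{jk}\Big(\operatorname{Hess}u_j(e_1,e_1)\operatorname{Hess}u_k(e_2,e_2)-\operatorname{Hess}u_j(e_1,e_2)\operatorname{Hess}u_k(e_1,e_2)\Big),
\end{align*}
which is quadratic in the Hessians.

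Semiconcavity with a small constant only yields $\det(\operatorname{Hess}u_j|_{TF_t})\ge-\varepsilon|\operatorname{Hess}u_j|$. This one-sided bound on the diagonal terms can give an upper bound for $\int\sec(TF_t)$, but not the identity with $2\pi-\beta(p)$. The diagonal terms can be large and positive, and the cross terms weighted by $G^{jk}-\delta^{jk}$ have no sign.

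Your Bochner fallback does not close this. Distance-type coordinates have measure-valued Laplacians, so Bochner gives them no $L^2$ Hessian bound. Harmonic splitting maps do have small scale-invariant $L^2$ Hessian, but they control $|\nabla u_j|$ and $G_{jk}$ only in $L^p(\mathrm{vol})$. That governs the coarea Jacobian and the position of $TF_t$, and it says nothing on the set of vanishing volume where $|\operatorname{Rm}_i|\,\mathrm{d}\mathrm{vol}_{g_i}$ concentrates. That set is exactly where you need control, both to evaluate the integral and to know which component of the limit tensor you have computed. The same issue undermines your compactness step: lifted gradients do not converge in any sense compatible with weak convergence of measures concentrating near an $(n-2)$-dimensional set.

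Two further steps in (ii) are unsupported.
\begin{enumerate}
\item $\int_{\partial F_t}k_g\to\beta(p)$ is second-order information that Gromov--Hausdorff closeness cannot give. You would need to smooth the non-smooth fibers, prove $\chi(F_t)=1$ by a fibration argument, and average over a family of boundary curves so that only lengths enter, via $\int k_g=\frac{d}{ds}\mathrm{length}$.
\item Gauss--Bonnet on fibers of $u$ only sees $\sec(TF_t)$, so ``$\approx 0$ in the other components'' is not argued.
\end{enumerate}

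\textbf{Item (i).} The premise that Hessians of coordinate functions converge at smooth points is false. Smoothness of $X$ near $p$ gives no second-order control on $M_i$. For example, $M_i$ may carry more and more tiny smoothed conical bumps, with the compensating negative curvature spread out so that $\sec\ge\kappa$. Then Hessians of distance coordinates blow up on thin strips, and $|\operatorname{Rm}_i|\,\mathrm{d}\mathrm{vol}_{g_i}$ does not tend to $|\operatorname R|\,\mathrm{d}\cH^n$, although the signed measures do. Proving this cancellation is precisely what (i) asserts. Your slice computation presupposes it, and it meets the same quadratic-error problem as (ii).

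Finally, (i)--(iii) do not determine $\mu$ at non-smooth regular points. The edge of a doubled convex body carries $(n-1)$-dimensional curvature, as the paper notes after the statement. So ``independent of the subsequence'' does not give convergence of the full sequence.
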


The above result does not provide an intrinsic description for the term $\Phi\,\operatorname{d}\cH^{n-1}$, although examples (such as the doubled disk) show that it exists and may behave like the mean curvature of hypersurfaces. By definition, the support of $\Phi$ should include non-$C^2$ points, which may have codimension exactly $1$  (see, e.g., doubled disk). However, it was questionable whether the set of non-smooth points in an Alexandrov space could have codimension less than $1$. 

Let $\mathcal{N}(X)$ denote the collection of non-$C^2$ points in $X \in \Alexnk$. Recall that a perfect set is a closed set containing no isolated points. For example, closed intervals and Cantor sets are perfect sets. The following result is well known to experts based on the famous Alexandrov Existence Theorem for the $2$-dimensional Monge-Amp\`ere equation. For reader's convenience, we will give an explicit constructive proof in this paper.

\begin{theorem}\label{thm:B}
	For any perfect set $\mathcal{K}\subset (0,1)$ with Hausdorff dimension $\alpha\in[0,1]$, there exists an Alexandrov space $X\in\Alex^2(0)$ such that the following hold.
	\begin{enumerate}
		\renewcommand{\labelenumi}{(\arabic{enumi})}
		\setlength{\itemsep}{1pt}
		\item $X$ is homeomorphic to a disk in $\mathbb R^2$. 
		\item $X$ is flat outside $\mathcal N(X)$. 
		\item $\mathcal N(X)\setminus\partial X$ is bi-Lipschitz equivalent to $\mathcal{K}\times \mathcal{K}$.
		\item The curvature measure $\mu$ is supported on $\mathcal N(X)$. 
	\end{enumerate}
	Moreover, if $\mathcal{K}$ is Ahlfors regular, then the Hausdorff dimension $\dim_{\cH}(\mathcal N(X)\setminus\partial X)=2\alpha$ and $\mu|_{\mathcal N(X)\setminus\partial X}$
	is equivalent to the Hausdorff measure $\mathcal H^{2\alpha}$.
\end{theorem}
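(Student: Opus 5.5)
The plan is to realize $X$ as the graph of a convex function over a square, equipped with the induced intrinsic metric. The boundary of a convex body in $\mathbb R^3$, and any convex subset of such a surface, is an Alexandrov space of curvature $\ge 0$. The function will be built so that its non-smooth locus is a product Cantor-type set.

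\textbf{Step 1: a one-variable convex function.} Write $\nu$ for a probability measure on $\mathcal K$. When $\mathcal K$ is Ahlfors regular, take $\nu=\mathcal H^{\alpha}|_{\mathcal K}$ suitably normalized. In general, use a Frostman-type measure with $\operatorname{supp}\nu=\mathcal K$; perfectness guarantees that such a $\nu$ exists with no atoms and full support. Define
\[
f(t)=\int_0^t \nu([0,s])\,ds .
\]
Then $f$ is $C^1$ and convex, with $f''=\nu$ in the distributional sense. So $f$ is affine (hence smooth) on each component of $(0,1)\setminus\mathcal K$. Because $\mathcal K$ has no isolated points, every point of $\mathcal K$ is a point where $f'$ is not locally constant, and there $f$ fails to be $C^2$.

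\textbf{Step 2: the surface.} Let $u(x,y)=f(x)+f(y)$ on $[0,1]^2$, and let $X$ be its graph with the induced length metric. Then $u$ is convex, so $X$ is a convex surface, which settles (1) together with the Alexandrov property. On the open set $\{x\notin\mathcal K\}\cap\{y\notin\mathcal K\}$, $u$ is affine and $X$ is flat. On $\{x\notin\mathcal K,\ y\in\mathcal K\}$, $u$ is affine in $x$, so locally $X$ is a cylinder over a convex curve, hence developable and flat. The metric there is still $C^2$-smooth in suitable intrinsic coordinates, because the Gaussian curvature measure $\det D^2u$ vanishes there.

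The main obstacle is exactly this step: proving that points of $(\mathcal K\times (0,1)\cup(0,1)\times\mathcal K)\setminus \mathcal K\times\mathcal K$ are genuinely $C^2$ points of the intrinsic metric, and that points of $\mathcal K\times\mathcal K$ are not. For the first claim, I would use that on the strip $x\in I$ (a gap interval) the surface is isometric to a flat region via arclength reparametrization in $y$. This gives intrinsic flat charts. For the second claim, I would compute the curvature measure. For a convex graph with $C^1$ $u$, Alexandrov's curvature measure equals the pullback of the Monge–Ampère measure under the Gauss map, weighted by $(1+|\nabla u|^2)^{-3/2}$ type factors. Here $\det D^2 u=\nu\otimes\nu$, so $\mu$ is comparable to $\nu\otimes\nu$ on the interior. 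Any neighborhood of $(a,b)\in\mathcal K\times\mathcal K$ therefore carries positive curvature, so the metric cannot be flat, hence not smooth, near it. Combining this with flatness elsewhere gives $\mathcal N(X)\setminus\partial X=\operatorname{graph}(u|_{\mathcal K\times\mathcal K})$, up to reconciling that the remaining singular points are interior to flat pieces. The $C^1$ non-smoothness along the lines, if any, should be treated carefully, possibly by smoothing $f$ away from $\mathcal K$. This settles (2) and (4).

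\textbf{Step 3: bi-Lipschitz and measure claims.} The projection from $X$ to $[0,1]^2$ is bi-Lipschitz, since $|\nabla u|\le 2$ and the intrinsic metric on a convex graph is comparable to the Euclidean one. So $\mathcal N(X)\setminus\partial X$ is bi-Lipschitz to $\mathcal K\times\mathcal K$, giving (3). If $\mathcal K$ is Ahlfors $\alpha$-regular, then $\mathcal K\times\mathcal K$ is Ahlfors $2\alpha$-regular and $\nu\otimes\nu\simeq\mathcal H^{2\alpha}|_{\mathcal K\times\mathcal K}$. Bi-Lipschitz maps preserve these comparabilities, which yields $\dim_{\mathcal H}=2\alpha$ and $\mu\simeq\mathcal H^{2\alpha}$.
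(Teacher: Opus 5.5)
\textbf{Gap: the square graph is not in $\Alex^2(0)$.} The graph of $u(x,y)=f(x)+f(y)$ over the square $[0,1]^2$, with its induced length metric, is \emph{not} an Alexandrov space of curvature $\ge 0$. Convexity of $u$ makes the full graph over $\mathbb R^2$ a convex surface, but the piece over the square is not a convex subset of it.

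Look at the side $x=1$. Since $f'\equiv 1$ on $(\max\mathcal K,1]$, over that strip the surface is the cylinder $z=x+f(y)+c$. The coordinates $S=\sqrt2\,(x+f(y)/2)$, $T=\int_0^y\sqrt{1+f'(s)^2/2}\,ds$ turn the induced metric into $dS^2+dT^2$. In these flat coordinates your $X$ is locally $\{S\le \sqrt2+f(y(T))/\sqrt2\}$, the region \emph{below} the graph of a convex function of $T$. That function is not affine, because its slope $\frac{f'/\sqrt2}{\sqrt{1+f'^2/2}}$ increases as $f'$ goes from $0$ to $1$ across $\mathcal K$. So this side is locally concave, with turning equal to a negative multiple of $\nu$ in the $y$-variable. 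Chords between nearby boundary points leave $X$, shortest paths hug the boundary and branch, and comparison fails. The side $y=1$ behaves the same way.

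No other choice of $f$ rescues the square. The same computation shows that the side $x=0$ (resp.\ $x=1$) has geodesic curvature of the sign of $f'(0)f''(y)$ (resp.\ $-f'(1)f''(y)$). Local convexity of both sides would force $f'(0)\ge 0\ge f'(1)$, hence $f'\equiv 0$ on $[0,1]$ and no curvature at all.

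\textbf{How the paper avoids this.} It cuts with a horizontal plane instead of a vertical box. It takes $F=g(x)+g(y)$ with $g$ concave and $g'$ decreasing from $1$ to $-1$ across $\mathcal K$, and sets $X=\Sigma\cap\{z\ge -1\}$. A horizontal section of a convex surface is locally convex from the side of the cap, and $X$ is a compact disk because $F\to-\infty$ in every direction. In your convex normalization this means taking the graph over $\{u\le c\}$. That requires $f'$ to change sign: with your $f'$ running from $0$ to $1$, the sublevel sets of $u$ are unbounded.

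With that change, the rest of your plan matches the paper. Flatness off $\mathcal K\times\mathcal K$ comes from developable cylinders. The curvature density is $(1+p^2+q^2)^{-3/2}$ against $\nu\otimes\nu$ in slope coordinates; you get it from the Gauss image, while the paper gets the same formula from Gauss--Bonnet on rectangles via $\Psi$. Your choice $\nu=\mathcal H^{\alpha}|_{\mathcal K}$ yields the comparability $\mu\simeq\mathcal H^{2\alpha}$ more directly than the paper's positivity criterion.

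\textbf{A smaller logical slip.} ``Positive curvature in every neighborhood, hence not flat, hence not smooth'' is a non sequitur. To show that points of $\mathcal K\times\mathcal K$ are non-$C^2$, you need that $\mu$ has no continuous density near them. That depends on $\nu$: if $\mathcal K$ contains an interval and $\nu$ has a smooth density there, those points are $C^2$. So choose $\nu$ singular with full support on $\mathcal K$, or adapt the argument.
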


In particular, $\mathcal{N}(X)\setminus\partial X$ can be a Cantor set with arbitrarily small fractal codimension, and make a positive contribution to the curvature measure.
These examples indicate that, in Naber's Conjecture \ref{c:measure conv}, extra terms are needed to handle this fractal type of curvature distribution.

In the context of Alexandrov spaces, classical curvature values are not well-defined at non-smooth points. Although we have identified examples not covered by Conjecture \ref{c:measure conv}, \hyperlink{thm:A}{Theorem A} and Theorem \ref{t:cont} strongly suggest the existence of a (signed) locally finite synthetic curvature measure on general Alexandrov spaces. It is a well-established fact that if a sequence of Riemannian manifolds $(M_i, g_i)$ satisfying $\sec_{M_i}\ge \kappa$ converges to a space $X$ in the Gromov-Hausdorff sense, then $X\in\Alexnk$. Provided this convergence is non-collapsing (in which case $X$ is termed smoothable), a curvature measure on $X$ can be explicitly defined via Theorem \ref{t:cont}. However, this definition remains extrinsic for non-smooth regular points on $X$, as Theorem \ref{t:cont} lacks an intrinsic description for this particular stratum. Furthermore, as a consequence of Perelman's Stability Theorem, not all Alexandrov spaces can be realized as non-collapsed limits of Riemannian manifolds with uniform lower sectional curvature bounds. To address these most general cases, we propose the following conjecture.

\begin{conjecture}\label{c:curv-measure}
	Let $X\in \Alexnk$. Then there exists a locally finite curvature measure 
	$$ \mu = {\operatorname R}\,\operatorname{d}\!\cH^n + \theta\,\operatorname{d}\!\cH^{n-2} + \omega, $$
	where ${\operatorname R}$ and $\theta$ are locally $L^1$-functions with respect to $\cH^n$ and $\cH^{n-2}$ respectively. The function ${\operatorname R}(x)$ is the scalar curvature, supported on the $C^2$ part. The function $\theta(x)$ is supported on the top stratum of the $(n-2)$-rectifiable singular set $\cS(X) = \cS^{n-2}(X)$. The term $\omega$ is supported on the union of non-$C^2$ regular points and the boundary. 
		
	Moreover, if $X$ is a non-collapsed limit of a sequence of Riemannian manifolds with a uniform lower sectional curvature bound, then such a measure is equivalent to the limit scalar curvature measure.
\end{conjecture}

Note that \hyperlink{thm:A}{Theorem A} partially verifies the this conjecture for Alexandrov spaces under specific additional assumptions. Furthermore, Example \ref{r:eg.bdy} and Theorem \ref{thm:B} demonstrate why general Alexandrov spaces present a significant challenge. Even to establish the local finiteness of the absolutely continuous part ${\operatorname R}\,\operatorname{d}\!\cH^n$, one need to precisely understand the distribution of non-smooth regular points. Indeed, these points contribute directly to the boundary terms in our integral estimates (see, e.g., \eqref{l:int.corner.mfd.e9.6.5}). Unfortunately, the nature of this distribution remains poorly understood at the present stage.

As our third motivation and a further application, \hyperlink{thm:A}{Theorem A} yields a singular version of the following conjecture proposed by Gromov.

\begin{conjecture}[\cite{Gro86}, Open Question 2.A.(b)]
	There exists a dimensional constant $C(n)$ such that if $(M, g)$ is an $n$-dimensional Riemannian manifold with $Ric_M\ge 0$ and $scal_M\ge 1$, then for any $p\in M$, we have
	\begin{align}
		\sup_{R>0}\frac{vol(B_R(p))}{R^{n-2}}\le C(n).
	\end{align}
\end{conjecture}
Gromov outlined a proof of this conjecture for the case of nonnegative sectional curvature (see \cite{Gro86}) and raised the question of whether it could be extended to the nonnegative Ricci curvature setting. Observe that when $\sec_M\ge 0$, a straightforward rescaling argument applied to Theorem \ref{t:Pet.X} yields $\dsp\int_{B_R(p)}scal \,\operatorname d {vol}_{g}\le c(n)R^{n-2}$. This resolves the conjecture in the nonnegative sectional curvature case. Notably, this method differs from Gromov's original sketch. Utilizing a similar rescaling technique, \hyperlink{thm:A}{Theorem A} implies the following result regarding the collapsing of non-negatively curved Riemannian manifolds that possess boundaries and singularities.

\begin{corollary}\label{c:thmA-large}
	Let $M\in\Alex^n(0)$ be an open manifold or smooth manifold with boundary.  Then for any $p\in M$ and $R>0$, we have
	\begin{align}
		\int_{B_R(p)}scal \,\operatorname d {vol}_{g}\le C(n)R^{n-2}.
	\end{align}
	If in addition, $scal_x\ge 1$ for any interior point $x\in M\setminus\partial M$, then 
	\begin{align}
		\sup_{R>0}\frac{vol(B_R(p))}{R^{n-2}}\le C(n).
		\label{c:thmA-large.e1}
	\end{align}
\end{corollary}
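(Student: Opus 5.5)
The plan is to obtain both inequalities of Corollary \ref{c:thmA-large} from \hyperlink{thm:A}{Theorem A} by rescaling. Fix $p\in M$ and $R>0$, and let $X$ be the metric completion of $M$. By hypothesis $X\in\Alex^n(0)$, so the rescaled space $X_R=(X,R^{-1}d_X)$ also lies in $\Alex^n(0)\subset\Alex^n(-1)$; the lower bound $0$ is invariant under scaling, which is where nonnegativity is used.

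\textbf{Step 1: $M$ is the regular part.} Because $M$ is an open manifold or a smooth manifold with boundary, every interior point of $M$ has Euclidean tangent cone. Hence $\cS^{n-2}_\eta(X_R)\cup\partial X_R$ contains no interior point of $M$, and $B_2(p)\setminus(\cS^{n-2}_\eta(X_R)\cup\partial X_R)$ is a smooth Riemannian manifold for any $\eta>0$. We fix once and for all a dimensional choice, say $\eta=1$; the remaining singular points lie in the completion and have zero volume. Theorem A then gives, for the rescaled metric $R^{-2}g$,
\begin{align*}
\int_{B^{R^{-2}g}_1(p)} scal_{R^{-2}g}\,\operatorname d vol_{R^{-2}g}\le C(n,1)=:C(n).
\end{align*}

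\textbf{Step 2: undo the scaling.} We have $scal_{R^{-2}g}=R^2 scal_g$, $\operatorname d vol_{R^{-2}g}=R^{-n}\operatorname d vol_g$, and $B^{R^{-2}g}_1(p)=B_R(p)$. Substituting these into the bound from Step 1 gives $R^{2-n}\int_{B_R(p)}scal_g\,\operatorname d vol_g\le C(n)$, which is the first inequality.

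\textbf{Step 3: the volume bound.} If $scal\ge1$ on $M\setminus\partial M$, then $\partial M$ has measure zero, so $vol(B_R(p))\le\int_{B_R(p)}scal\,\operatorname d vol_g\le C(n)R^{n-2}$. Taking the supremum over $R$ gives \eqref{c:thmA-large.e1}.

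The main obstacle is Step 1: we need $\eta$ to be a universal choice, so that $C(n,\eta)$ becomes a function of $n$ alone, and we need the smooth hypothesis of Theorem A to hold on all of $B_2(p)$ after rescaling. We handle both by taking $\eta$ fixed and observing that being a smooth manifold is a scale-invariant property. The sign of the curvature lower bound is essential here: a bound $-1$ would become $-R^2$ after rescaling, and that bound is not admissible for Theorem A when $R$ is large.
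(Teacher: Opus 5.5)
This is the paper's own argument: rescale $X$ by $R^{-1}$, note that the lower curvature bound $0$ is scale invariant so Theorem A applies to $(X,R^{-1}d_X)$ with a fixed $\eta$, undo the scaling via $scal\mapsto R^{2}scal$ and $\operatorname d vol\mapsto R^{-n}\operatorname d vol$, and then use $scal\ge 1$ to get the volume bound. One caution on Step 1: showing that interior points of $M$ avoid $\cS^{n-2}_\eta(X)\cup\partial X$ gives the wrong inclusion, since Theorem A needs every point of $B_2(p)$ outside $\cS^{n-2}_\eta(X)\cup\partial X$ to be a smooth point; this is immediate for a manifold with smooth boundary ($X=M$, $\partial X=\partial M$, and $\cS^{n-2}_\eta(X)=\emptyset$ for every $\eta$), but for an open manifold it is a condition on the ideal points of the completion that ``zero volume'' does not supply (for instance, cone points with deficits tending to $0$), so in that case your step is really an appeal to the ``in particular'' clause of Theorem A, which is exactly what the paper's one-line rescaling proof relies on.
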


Example \ref{r:eg.bdy} also gives a counterexample to (\ref{c:thmA-large.e1}) when $M$ is a non-negatively curved open manifold that does not belong to $\Alex^n(0)$.

Our approach is inspired by that of Petrunin \cite{Pet09}. The primary difference is that we utilize a finite stratified covering directly on the given space (or its metric completion), rather than arguing by contradiction on a sequence of manifolds and intersections of level sets that satisfy certain technical lifting properties. Additionally, we need to modify the covering annuli to handle singularities and boundary terms.

The outline of the proof is as follows. Using the covering technique developed in \cite{LiNab20}, we first cover the given unit ball $B_1$ by a controlled number of compact subsets that behave like annular regions in metric cones. This is called a good scale annulus covering. Using the almost metric cone structure and the Gauss--Codazzi formula, we reduce the estimate of the scalar curvature integral on $B_1$ to the estimates of the intrinsic scalar curvature integrals on the level sets of the covering annuli. Then, we cover the level set of each annular region by a controlled number of good scale annuli again. By adjusting the intersection angles between the level sets from different strata and the boundaries, we can repeat this process and apply mathematical induction on the dimension of the intersection of these level sets (called corner spaces). We also need to carefully analyze the distribution of singular points on each of the corner spaces in order to control the boundary terms. The $2$-dimensional case follows from the finiteness of the number of components of the corner spaces.

To conclude the introduction, we present some open questions in this direction. By a straightforward rescaling argument, it follows from Theorem \ref{t:Pet.X} that if $\sec_M\ge\kappa$, then $\dsp\int_{B_r}scal \,\operatorname d {vol}_{g}\le c(n,\kappa)r^{n-2}$, provided $r\le 1$ when $\kappa <0$. It is easy to see that as $r\to 0$, the sharp constant $c(n,\kappa)$ approaches the same constant as in the Euclidean case. However, for a fixed $r>0$, we cannot determine the sharp constant $c(n,\kappa)$ merely by tracing our covering-type proof.

\begin{question}
	Let $(M, g)$ be an $n$-dimensional manifold with $\sec_M\ge \kappa$. What is the sharp upper bound of $\dsp\int_{B_r}scal \,\operatorname d {vol}_{g}$? 
\end{question}

\begin{conjecture}
	Let $(X, d_X)$ be an $n$-dimensional Alexandrov space with curvature $\ge 1$. Let $g$ be the Riemannian metric induced by $d_X$ on the smooth part $X_1$.  
	\begin{enumerate}
		\renewcommand{\labelenumi}{(\roman{enumi})}
		\item If $\partial X=\varnothing$, then $\dsp\int_{X_1}scal(x) \,\operatorname d {vol}_{g}\le \int_{\mathds S^n_1} scal_{_{\mathds S^n_1}}\operatorname d {vol}_{S^n_1}$, where $\mathds S^n_1$ is the $n$-dimensional standard sphere. The equality holds if and only if $X$ is isometric to $\mathds S^n_1$, provided $n\ge 3$.
		\item If $\partial X\neq \varnothing$, then $\dsp\int_{X_1}scal(x) \,\operatorname d {vol}_{g}\le \frac12\int_{\mathds S^n_1} scal_{_{\mathds S^n_1}}\operatorname d {vol}_{S^n_1}$. The equality holds if and only if $X$ is isometric to the $n$-dimensional standard semi-sphere, provided $n\ge 3$.
	\end{enumerate}
\end{conjecture}

The boundary case can be proved by an easy doubling argument if the case without boundary is true. A similar conjecture was proposed for the smooth setting in \cite{Li25}.

The author would like to thank Capital Normal University for the host when completing this work. He also thanks Anton Petrunin, Tadashi Fujioka and Ge Xiong for helpful discussions.

\section{$\epsilon$-Frame Covering}\label{sec:frame-covering}

We develop a stratified covering technique in this section. Each element in our covering will be an annulus-like region. In the next section, we will calculate the integral of scalar curvature by induction along this stratified structure. 

\subsection{Good scale annulus covering}

Let $X\in\Alex^n(-1)$, $p\in X$ and $A_a^b(p)=\{x\in X\colon a\le d(p,x)\le b\}$ denote the closed annulus centered at $p$. In particular, if $a=0$, then $A_0^r(p)=\bar B_r(p)$ is the closed geodesic ball. 

\begin{definition}[Good scale annulus]
	The annulus region $A_a^b(p)$ is said to be an $(\epsilon, \sigma)$-good scale annulus if the following hold. 
	\begin{enumerate}
		\renewcommand{\labelenumi}{(\roman{enumi})}
		\item $a\le\sigma^6 b$.
		\item There exists a metric cone $C_{p^*}(\Sigma)$ such that
		\begin{align}
			d_{GH}\Big(B_s(p), \, B_s(p^*)\Big)\le \epsilon^2 s
			\label{defn:bad scale.e1}
		\end{align}
		for every $s\in[\sigma^6 a,\sigma^{-6}b]$.
	\end{enumerate}
\end{definition}

Any compact set in $X$ can be covered by a controlled number of good scale annuli. This is known as the Annulus Covering Theorem.

\begin{theorem}[Covering by good scale annuli, Theorem 15.3.14 \cite{Li20}, Li-Naber \cite{LiNab20}]\label{t:ann_cover}
	Let $\epsilon, \sigma >0$ and $n\in\dN$. 
	For any $X\in\Alex^n(-1)$ and any closed subset $\Omega\subseteq\bar B_1$, there exists a collection of $(\epsilon, \sigma)$-good scale annuli  $\mathcal C=\Big\{A_{a_i}^{b_i}(p_i)\Big\}$ such that  
	\begin{enumerate}
		\renewcommand{\labelenumi}{(\roman{enumi})}
		\item $\Omega\subseteq \cup_iA_{a_i}^{b_i}(p_i)$,
		\item $|\mathcal C|<C(n,\epsilon,\sigma)$,
		\item $p_i\in\Omega$ for every $i$.
	\end{enumerate}
\end{theorem}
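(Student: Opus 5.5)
The statement to prove is Theorem \ref{t:ann_cover}. The plan is the Li--Naber quantitative-stratification covering: a good scale annulus is a region where a single cone approximates the space at every scale in a window. The number of scales at which such an approximation fails is controlled through a monotone quantity, namely the volume ratio (Bishop--Gromov).

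\textbf{Step 1: a monotone quantity and cone rigidity.} For $x\in\Omega$ and $0<s\le 1$, set $V_x(s)=\cH^n(B_s(x))/V^n_{-1}(s)$, where $V^n_{-1}(s)$ is the volume of the $s$-ball in hyperbolic space of curvature $-1$. This ratio is nonincreasing in $s$ and bounded below by $v_0>0$ on the relevant scales (in the noncollapsed normalization; otherwise one uses the standard cone-splitting replacement, with the constant still depending only on $n$). The rigidity step is a compactness argument. If
\[ V_x(\sigma^{7}r)-V_x(\sigma^{-7}r)<\delta(n,\epsilon,\sigma), \]
then $B_s(x)$ is $\epsilon^2 s$-close to a metric cone ball for all $s\in[\sigma^{6}r,\sigma^{-6}r]$. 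This follows from volume cone implies metric cone, which in Alexandrov spaces holds via Perelman's stability and the fact that tangent cones are cones. Since $V_x$ is monotone and bounded, for each $x$ the number of $\sigma^{7}$-adic scales at which the drop exceeds $\delta$ is at most $N(n,\epsilon,\sigma)$. I call these the \emph{bad scales} at $x$.

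\textbf{Step 2: decomposition at a single point.} Fix $x$. Its bad scales $r_1>\dots>r_N$ cut $(0,1]$ into at most $N+1$ windows of consecutive good scales. On each window $[a,b]$, Step 1 gives a single cone only if the cone can be chosen uniformly across the window. To get this, I plan to shrink each window slightly at the ends and use that nearby cones at comparable scales are close, combined with the fact that pinching of a monotone quantity across the whole window forces a single cone. This produces the annuli $A_a^b(x)$, except that condition (i) $a\le\sigma^6b$ requires short windows to be discarded. Short windows only contribute scales near bad scales, and these are handled by covering them with small balls at the next step.

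\textbf{Step 3: covering and counting.} This is the main obstacle, since the pointwise decomposition does not by itself give a uniform count $|\mathcal C|$. I will run an inductive scheme over scales $\sigma^{7j}$. At each scale, pick a maximal $\sigma^{7j}$-separated net in the part of $\Omega$ not yet covered by annuli; packing in Alexandrov spaces gives at most $C(n)\sigma^{-7n}$ children per ball. At a net point $y\in\Omega$ (which gives (iii)), if the current scale starts a good window, place the full annulus $A_a^b(y)$; it covers everything down to scale $a$ except $B_a(y)$, which is recursed on. Otherwise descend one scale. To bound the total, observe that along any branch of the recursion every descent either spends a bad scale of the current center or closes off an annulus. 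Bad scales can shift when the center moves, but by continuity of $V$ at comparable scales, a bad scale for a child forces a drop of at least $\delta/2$ for the parent at a comparable scale. Hence the depth of any branch, counted in non-annulus steps, is at most $2N+2$. The number of leaves is then at most $(C(n)\sigma^{-7n})^{2N+2}=C(n,\epsilon,\sigma)$.

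The delicate point is that annuli placed at one center do not cover the inner ball, so the recursion must restart inside $B_a(y)$ with the volume-ratio budget inherited, not reset. I expect to verify that $V_y$ has not dropped across the good window, so no budget is consumed there. Together with the net construction, this gives (i)--(iii).
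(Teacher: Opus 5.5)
The paper does not prove this theorem; it quotes it from Li--Naber. So I judge your argument on its own terms, and the counting step in Step~3 fails.

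\textbf{Step 3.} You bound branch depth by transferring a child's bad scale to its parent. But a child $y'$ of the node $B_r(y)$ lies at distance up to $r$ from $y$, while the drops you compare live at radii from $\sigma^{7}r$ to $\sigma^{-7}r$. For centres displaced by a distance comparable to the radius, Bishop--Gromov compares $V_y$ and $V_{y'}$ at best up to a factor $C(n)$, not up to $\delta/2$. Drops of $V_{y_0},V_{y_1},\dots$ at different centres are therefore not charged to any single budget.

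Concretely, let $X=C(S^1_\beta)$ with $\pi\le\beta\le 2\pi(1-2\delta)$ and $\Omega=\bar B_1(o)$. A node $B_r(y)$ with $2\sigma^7 r\le d(y,o)\le r$ is bad: across $[\sigma^7r,\sigma^{-7}r]$ the ratio $V_y$ falls from about $1$ to about $\beta/2\pi$. Nothing in your scheme stops the net point nearest $o$ at the next level from again satisfying this. So the branch towards $o$ never terminates and $o$ is never covered, although every point of $X$ has $O(1)$ bad scales. Your bound ``$2N+2$'' is therefore false. In fact, for small $\epsilon$ any annulus containing $o$ must be centred at $o$, since otherwise its window contains the non-conical scale $d(y,o)$.

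The missing idea is an extremal choice of centres. Given $\Omega\cap\bar B_R$, centre the annulus at a point $x\in\Omega$ whose $\delta$-pinched window starting at scale $2R$ reaches lowest (up to a factor $2$), say down to $a$. Then every remaining $y\in\Omega\cap\bar B_a(x)$ has spent $\delta$ of its \emph{own} budget at scales $\gtrsim a$. Cover $\Omega\cap\bar B_a(x)$ by $C(n,\sigma)$ balls of radius $\sigma^{20}a$ centred in $\Omega$ and recurse. This gives $C_k\le 1+C(n,\sigma)\,C_{k-1}$ in the per-point budget $k$, with an analogous step when the window is too short, and no comparison between different centres is ever needed.

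\textbf{Steps 1 and 2.} Step~1 does not supply that budget in the generality of the statement. The constant $C(n,\epsilon,\sigma)$ may not depend on $\mathcal H^n(B_1(p))$, and a space in $\mathrm{Alex}^n(-1)$ may be collapsed at any scale.
\begin{itemize}
\item With your absolute pinching, rigidity is false. Take $Z\times S^1_\tau$ with $Z\in\mathrm{Alex}^{n-1}(-1)$ non-conical at scale $r$. Every drop $V_x(\sigma^7 r)-V_x(\sigma^{-7}r)$ is $O(\tau\sigma^{-7}/r)$, yet $B_r(x)$ is far from every cone ball.
\item With relative pinching, the count fails. In $\mathbb R^{n-1}\times S^1_\tau$ the relative volume drops by a definite factor at each of the roughly $\log(1/\tau)$ scales in $[\tau,1]$, although all those balls are conical.
\end{itemize}
Your ``standard cone-splitting replacement'' hides exactly the point where Alexandrov geometry must enter. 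You need a per-point monotone quantity that is blind to collapsing, for instance one built from monotonicity of comparison angles at a fixed vertex.

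Step~2 also needs repair. A maximal run of individually good scales can have large cumulative pinching: a cone angle drifting slowly over many scales is $\epsilon$-conical at each scale yet admits no single cone. So windows must be cut where the cumulative variation reaches $\delta$. You then need a long-window rigidity giving one cone with error $\epsilon^2 s$ at every $s$ in the window, which is stronger than the usual volume-cone-implies-metric-cone statement.
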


Note that the lower bound for the size of the covering annuli depends on $\Omega$ and $X$ and it can be arbitrarily small. This is different from the classical covering by finitely many balls whose radii usually have a  uniform positive lower bound.  

\subsection{$\epsilon$-frames}\label{subsec:eps-frame}

Let $\Big\{A_{a_i}^{b_i}(p_i)\Big\}$ be an $(\epsilon, \sigma)$-good scale annulus covering of $B_1 \subseteq X$. Note that an annulus region $A_a^b(p)$ can be exhausted by the level sets of its distance function: 
$$A_a^b(p) = \bigcup_{a \le t \le b} d_p^{-1}(t).$$
Fix $i$ and $t_i \in [a_i, b_i]$. By the Annulus Covering Theorem (Theorem \ref{t:ann_cover}), there is an $(\epsilon, \sigma)$-good scale annulus covering $\Big\{A_{a_j'}^{b_j'}(q_j)\Big\}$ for $\Omega = d_{p_i}^{-1}(t_i)$. By inductively applying this covering theorem to the intersections of level sets from the descending stratum, we obtain a coordinate-like frame for each point in the intersections. In our application, we also need to perturb the level sets such that they carry more geometric properties.

\begin{definition}[$f$-adjusted good scale annulus]\label{d:cvx.ann.mfd} 
	Let $A_{a}^{b}(p)$ be an $(\epsilon,\sigma)$-good scale annulus. Let $f\colon X\to \dR$ be a semiconcave function. The annulus region $W_a^b(p)=\cup_{a\le t\le b} \, f^{-1}(t)$ is said to be an  $(\epsilon,\sigma, \delta, f)$-good scale annulus, or simply $f$-adjusted or $(\epsilon, \sigma, \delta)$-adjusted annulus, denoted by $(W_a^b(p), f)$, if the following hold for every $x\in W_{\sigma^4a}^{\sigma^{-4}b}(p)$.
	\begin{enumerate}
		\renewcommand{\labelenumi}{(\roman{enumi})}
		\item $\left|f(x)-d_p(x)\right|\le \delta\cdot d_p(x)$.
		\item $\left||\nabla_x f|-1\right|<\delta$.
		\item $|\left\langle \nabla_x f, \nabla_xd_{p}\right\rangle-1| <\delta$.
	\end{enumerate}
\end{definition}

By a slight abuse of notation, we may denote $W_a^b(p)$ by $W$ and denote $f^{-1}(t)$ by $\partial W_t(p)$ or simply $\partial W_t$, where $\sigma^4a\le t\le \sigma^{-4}b$. Note that every $(\epsilon,\sigma)$-good scale annulus is a $(\epsilon,\sigma, 10\epsilon, d_p)$-good scale annulus. Next, we give the notions of frames.

\begin{definition}[$\epsilon$-frame]\quad
	\begin{enumerate}
		\renewcommand{\labelenumi}{(\roman{enumi})}
		\item A sequence of level sets $\{\partial W^1_{t_1}(p_1), \partial W^2_{t_2}(p_2),\dots, \partial W^m_{t_m}(p_m)\}$ is called an $(\epsilon, \sigma, \delta, f_1, \dots, f_m)$-frame if the following conditions are satisfied: 
		\begin{enumerate}
			\renewcommand{\labelenumii}{(\alph{enumii})}
			\item For every $i = 1, 2, \dots, m$, $W_{a_i}^{b_i}(p_i)$ is an $(\epsilon, \sigma, \delta, f_i)$-good scale annulus, and $\partial W^i_{t_i}=f_i^{-1}(t_i)$ with $t_i\in[a_i, b_i]$;
			\item For every $i = 1, 2, \dots, m-1$, $p_{i+1}\in \bigcap_{j=1}^i\partial W^j_{t_j}$.
		\end{enumerate}
		The intersection $Y=\bigcap_{i=1}^m \partial W^i_{t_i}$ is called an $(m,\epsilon, \sigma, \delta)$-corner space with respect to the defining functions $(f_1, f_2, \dots, f_m)$. We let $Y=X$ if $m=0$. 
		\item Such a frame (or corner space) is said to be $(\delta_1, \delta_2)$-orthogonal if the following hold: 
		\begin{enumerate}
			\renewcommand{\labelenumii}{(\alph{enumii})}
			\item $\delta_1 \le \langle \nabla_x f_j, \nabla_xf_k \rangle \le \delta_2$ for every $j\neq k$ and $x\in Y$;
			\item $-\delta_2 \le \langle \nabla_x f_j, \nabla_xd_{\partial X} \rangle \le -\delta_1$ for every $x\in Y\cap \partial X$. 
		\end{enumerate}
		\item A $(\delta_1, \delta_2)$-orthogonal frame $Y$ (or corner space) is said to be acute if $\delta_1 <\delta_2 < 0$. That is, the intersection angles between the level sets are acute. $Y$ is called a.e.-acute if condition (ii)-(b) holds for $\mathcal{H}^{n-m-1}$-almost every $x\in Y\cap\partial X$. 
		\item In our application, parameters $\sigma, \delta, \delta_1$, and $\delta_2$ are all of the form $c(n)\epsilon$. We may combine the dependent parameters and simply call the above frames an $(m,\epsilon)$-frame, an $(m,\epsilon)$-orthogonal frame, and an $(m,\epsilon)$-acute frame, respectively. Corner spaces follow the same renaming convention. 
	\end{enumerate}
\end{definition}

\subsection{$\epsilon$-frame covering}

The frames must first be $\epsilon$-orthogonal to serve as a replacement for local coordinates. We then take such an $\epsilon$-frame and perturb it into an a.e.-acute $\epsilon$-frame. This acuteness is necessary for controlling the second fundamental form. See \eqref{l:int.corner.mfd.e22}.

Indeed, the inductive covering procedure described in Section \ref{subsec:eps-frame} naturally yields an $\epsilon$-orthogonal frame under a mild additional condition.

\begin{lemma}\label{lem:ortho-scale}
	Let $\mathcal F = \{\partial W^1_{t_1}(p_1), \partial W^2_{t_2}(p_2),\dots, \partial W^m_{t_m}(p_m)\}$ be an $(\epsilon, \delta, f_1, \dots, f_m)$-frame. If $0<t_{i+1}\le\epsilon t_i$, then $\mathcal F$ is $\epsilon$-orthogonal. 
\end{lemma}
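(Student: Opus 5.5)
The plan is to show that for $j<k$ and every $x\in Y$, the gradients $\nabla_x f_j$ and $\nabla_x f_k$ are almost orthogonal, i.e.\ $|\langle \nabla_x f_j,\nabla_x f_k\rangle|\le c(n)\epsilon$, which is condition (ii)-(a) with $\delta_1=-c(n)\epsilon$, $\delta_2=c(n)\epsilon$. Condition (ii)-(b) I would treat separately, or read as vacuous when $Y\cap\partial X=\varnothing$. By conditions (ii) and (iii) of Definition \ref{d:cvx.ann.mfd}, $\nabla f_i$ is $c\sqrt\delta$-close to $\nabla d_{p_i}$ at every point of the adjusted annulus: expanding $|\nabla f_i-\nabla d_{p_i}|^2$ in the tangent cone gives a quantity at most $3\delta$. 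So it suffices to prove $|\langle\nabla_x d_{p_j},\nabla_x d_{p_k}\rangle|\le c\epsilon$ for $x\in Y$.

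\textbf{Geometric setup.} Fix $x\in Y$. Then $d_{p_j}(x)\approx t_j$ and $d_{p_k}(x)\approx t_k\le \epsilon^{k-j}t_j\le\epsilon t_j$. The centre $p_k$ lies on $\partial W^j_{t_j}$, so $f_j(p_k)=t_j=f_j(x)$ up to $\delta t_j$ errors. The approach is to work at the scale $s\approx t_j$ around $p_j$, where the good scale condition \eqref{defn:bad scale.e1} gives a cone $C_{p_j^*}(\Sigma)$ with $d_{GH}(B_s(p_j),B_s(p_j^*))\le\epsilon^2 s$. In the cone, $x$ and $p_k$ are two points at almost the same radial distance $t_j$ and at mutual distance about $t_k\ll t_j$. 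The radial function is nearly constant along the short segment $[p_k x]$ (a first-order variation of size $O(\epsilon t_j)$ over length $t_k$), so this segment is almost tangent to the level set of $d_{p_j}$. Consequently the angle between the direction $\uparrow_x^{p_k}$ (equal to $-\nabla d_{p_k}$ at $x$) and $\nabla_x d_{p_j}$ is within $c\epsilon$ of $\pi/2$.

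\textbf{Making this rigorous.} I would use first variation together with the law of cosines in triangle comparison, applied to the triangle $p_j,x,p_k$ with curvature $\ge-1$ and all scales $\le 1$. The identity $d_{p_j}(p_k)-d_{p_j}(x)=O(\delta t_j)$ combined with the cone structure, where $d_{p_j}^2/2$ is almost a smooth quadratic at scale $t_j$, controls the derivative of $d_{p_j}$ along $[x p_k]$.

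\textbf{Main obstacle.} A raw $O(\delta t_j)$ difference divided by length $t_k$ gives only $\delta t_j/t_k$, which is not small. So one must exploit the cone structure, which makes $d_{p_j}$ almost an exact cone radial function on $B_{2t_j}(p_j)$. In an exact cone, for two points at the same radius $r$, one gets $\cos\angle=O(|xp_k|/r)$, and here $|xp_k|/r\approx t_k/t_j\le\epsilon$. The Gromov–Hausdorff error $\epsilon^2 t_j$, however, is not small relative to $t_k$ when $t_k\le\epsilon t_j$. I would handle this as follows: $d_{p_j}$ is semiconcave and its gradient flow is almost radial with bounded distortion on the cone scale, so the estimate $|\langle \nabla d_{p_j},\nabla d_{p_k}\rangle|\le c\epsilon$ is obtained by applying the comparison at scale $\sim\epsilon^{-1}t_k$ rather than at $t_j$. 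This rescaling step is where I expect the real difficulty, and where the gap condition $t_{i+1}\le\epsilon t_i$ is used essentially.
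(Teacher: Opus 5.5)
\textbf{There is a genuine gap.} Your reduction to distance functions and your diagnosis of the difficulty are sound, but the proposal stops exactly where the proof has to happen.

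Semiconcavity of $d_{p_j}$, together with $d_{p_j}(p_k)=d_{p_j}(x)$, gives only the one-sided bound $\langle \nabla_x d_{p_j},\uparrow_x^{p_k}\rangle\ge -c\,t_k/t_j$. The reverse inequality, which says that $\uparrow_x^{p_k}$ does not point nearly along $\nabla_x d_{p_j}$, is the whole content of the lemma. The step ``apply the comparison at scale $\sim\epsilon^{-1}t_k$'' supplies no mechanism for it:
\begin{itemize}
\item The good-scale hypothesis on the $j$-th annulus concerns only balls centred at $p_j$ with radii in $[\sigma^6a_j,\sigma^{-6}b_j]$. Its error $\epsilon^2 t_j$ gives no control at scale $\epsilon^{-1}t_k$ near $x$ once $t_k\ll\epsilon^3 t_j$.
\item The cone structure around $p_k$ controls $d_{p_k}$, not $d_{p_j}$.
\item An appeal to the gradient flow of $d_{p_j}$ does not address this either.
\end{itemize}

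\textbf{The missing idea.} An almost-splitting at $x$, encoded as a strainer, controls angles at every smaller scale, so no rescaling is needed.
\begin{itemize}
\item As in the paper, the cone structure at scale $\sim t_j$ gives a point $p_j'$ with $\tilde\angle p_jxp_j'\ge\pi-c\epsilon$. The excess $e=d_{p_j}+d_{p_j'}-|p_jp_j'|$ is $2$-Lipschitz, so $\tilde\angle p_jyp_j'\ge\pi-\tau(\epsilon)$ for every $y\in B_{\epsilon t_j}(x)$.
\item At each such $y$, the triangle inequality and the perimeter bound $2\pi$ in $\Sigma_y$ give $|d_ye(\xi)|\le\tau$ for every unit direction $\xi$. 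Hence $e$ is $\tau$-Lipschitz on the segment $[xp_k]$.
\item Along that segment, $d_{p_j}$ is $\lambda$-concave with $\lambda\sim 1/t_j$. Since $d_{p_j}=e-d_{p_j'}+\mathrm{const}$, it is also $\lambda$-convex up to a $\tau$-Lipschitz term. It takes equal values at both ends, so its derivative at $x$ toward $p_k$ is $O(\tau+t_k/t_j)$.
\end{itemize}
This is what the paper's appeal to splitting theory (Lemma~5.6 of BGP) provides. The actual angle $\angle p_jxp_k$ is close to the comparison angle $\tilde\angle p_jxp_k$. The comparison angle is $\pi/2\pm c\epsilon$ because the triangle $p_jxp_k$ is nearly isosceles with base $t_k\le\epsilon t_j$; this is where the gap condition is used.

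\textbf{Two smaller points.}
\begin{itemize}
\item The level-set identity holds for $f_j$, not for $d_{p_j}$, since $d_{p_j}(x)$ and $d_{p_j}(p_k)$ may differ by $2\delta t_j$. So run the argument with $f_j=d_{p_j}$, as the paper does; otherwise the $\delta t_j/t_k$ error you flag is genuine.
\item You leave condition (ii)-(b) untreated. The paper disposes of it by noting that $\nabla_x d_{\partial X}$ is almost orthogonal to the almost-splitting direction $\nabla_x d_{p_i}$.
\end{itemize}
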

\begin{proof}
	By Definition \ref{d:cvx.ann.mfd}, it suffices to prove this for $f_i=d_{p_i}$. Let $A_{a_i}^{b_i}(p_i)$ and $A_{a_k}^{b_k}(p_k)$ be the good scale annuli defined by $f_i=d_{p_i}$ and $f_k=d_{p_k}$ respectively, where $k>i$. 
	
	For every $x\in \partial W_{t_i}\cap \partial W_{t_k}$, we have $d(p_i, x)=d(p_i, p_k)=t_i$. By assumption, we have $d(x,p_k)=t_k\le\epsilon t_i$. Thus, the comparison angle $\tilde{\angle} p_ix p_k$ is within $10\epsilon$ of $\frac{\pi}{2}$. By the good scale annulus structure of $A_{a_i}^{b_i}(p_i)$, one can find $p_i'\in A_{10b_i}^{11b_i}(p_i)$ such that the comparison angle $\tilde{\angle} p_ix p_i'\ge \pi-10\epsilon$. By splitting theory (or Lemma 5.6 in \cite{BGP}), we have $\left|\angle p_ix p_k-\frac{\pi}{2}\right|\le 50\epsilon$. 
	
	For $x\in \partial W_{t_i}\cap \partial X$, the result follows because $\nabla_x d_{\partial X}$ is always $\epsilon$-orthogonal to the almost splitting direction at $x$, which can be chosen to be $\nabla_x d_{p_i}$.
\end{proof}

The condition $0 < t_{i+1} \le \epsilon t_i$ can be imposed without loss of generality, as Theorem \ref{t:ann_cover} remains valid under the additional constraint $b_i \le \epsilon \cdot \diam(\Omega)$. Consequently, by Theorem \ref{t:ann_cover} and Lemma \ref{lem:ortho-scale}, we establish the following:

\begin{lemma}[Orthogonal Frame Covering]\label{t:cov-frame}
	Let $\left\{\partial W^1_{t_1}, \dots,\partial W^m_{t_m}\right\}$ be an $(m,\epsilon)$-orthogonal frame in $B_1$, with $1\le m <n$. There exists a covering of the corner space $\displaystyle Y=\cap_{i=1}^m\partial W^i_{t_i}$ by $\epsilon$-annuli $\mathcal C=\{W^{m+1,\ell}=W_{a_\ell}^{b_\ell}(q_\ell)\}$ with $q_\ell\in Y$ for every $\ell$, such that the following hold for every $1\le \ell\le |\mathcal C|$ and $a_\ell\le s_\ell\le b_\ell$.
	\begin{enumerate}
		\renewcommand{\labelenumi}{(\roman{enumi})}
		\item $|\mathcal C|\le N(n,\epsilon)$. 
		\item $\left\{\partial W^1_{t_1}, \dots,\partial W^m_{t_m}, \partial W^{m+1,\ell}_{s_\ell}\right\}$ is an $\displaystyle \left(m+1,\,\epsilon\right)$-orthogonal frame.
	\end{enumerate}
\end{lemma}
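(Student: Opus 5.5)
The plan is to apply the Annulus Covering Theorem (Theorem \ref{t:ann_cover}) to the closed set $\Omega=Y\cap\bar B_1$, with the extra scale constraint, and then check orthogonality using Lemma \ref{lem:ortho-scale}.

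First, $Y=\cap_{i=1}^m f_i^{-1}(t_i)$ is a closed subset of $X$, since each $f_i$ is semiconcave and hence continuous. By Theorem \ref{t:ann_cover}, applied with parameters $(\epsilon,\sigma)$ and $\sigma=c(n)\epsilon$, we get a family of $(\epsilon,\sigma)$-good scale annuli $A^{b_\ell}_{a_\ell}(q_\ell)$. These cover $Y$, their number is at most $C(n,\epsilon)$, and their centers satisfy $q_\ell\in Y$. Following the remark after Lemma \ref{lem:ortho-scale}, we also impose $b_\ell\le\epsilon^2 t_m$. Since $\diam(Y)\lesssim t_m$ (the set $Y$ lies inside $\partial W^m_{t_m}(p_m)$, which is contained in roughly a $2t_m$-ball around $p_m$), this is the constrained version of the covering theorem. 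It may require shrinking the outer radii and adding more annuli, say by rescaling the ball $B_{2t_m}(p_m)$ to unit size, and the count remains bounded by some $N(n,\epsilon)$. We take $f_{m+1,\ell}=d_{q_\ell}$, so that $W^{m+1,\ell}=A^{b_\ell}_{a_\ell}(q_\ell)$. As noted after Definition \ref{d:cvx.ann.mfd}, this is an $(\epsilon,\sigma,10\epsilon,d_{q_\ell})$-adjusted annulus, which gives frame condition (a). Condition (b) holds because $q_\ell\in Y$.

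Next comes orthogonality. Pairs $(j,k)$ with $j,k\le m$ are unchanged, because the original frame is $\epsilon$-orthogonal. For a pair $(i,m+1)$ and a point $x\in Y\cap\partial W^{m+1,\ell}_{s_\ell}$, we rerun the argument of Lemma \ref{lem:ortho-scale}. We have $d(x,q_\ell)=s_\ell\le b_\ell\le\epsilon t_i$, while $x$ and $q_\ell$ both lie on the level set $f_i^{-1}(t_i)$. Definition \ref{d:cvx.ann.mfd}(i) together with the almost-cone structure makes the comparison angle $\tilde\angle p_i x q_\ell$ close to $\pi/2$. The almost-splitting direction at scale $t_i$ (a point $p_i'$ with $\tilde\angle p_ixp_i'\ge\pi-10\epsilon$), combined with Lemma 5.6 of \cite{BGP}, then upgrades this to $|\angle(\nabla_x d_{p_i},\nabla_x d_{q_\ell})-\pi/2|\le c\epsilon$. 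Definition \ref{d:cvx.ann.mfd}(ii)--(iii) transfers this from $\nabla d_{p_i}$ to $\nabla f_i$ with an error of $c(n)\delta=c(n)\epsilon$. For the boundary condition (ii)(b) at $x\in Y\cap\partial X$, we argue as in the last step of Lemma \ref{lem:ortho-scale}. The direction $\nabla_x d_{q_\ell}$ can be taken to be an almost-splitting direction of the cone at $x$ at the small scale $s_\ell$, and $\nabla_x d_{\partial X}$ is almost orthogonal to every such direction.

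The step I expect to be the main obstacle is the orthogonality between $f_i$ and $d_{q_\ell}$ when $f_i$ is a perturbed function rather than a distance function. The issue is that $f_i^{-1}(t_i)$ is not exactly the distance sphere, so the claim $d(p_i,x)\approx d(p_i,q_\ell)$ only holds up to $\delta t_i$. That error is much larger than $s_\ell$ unless we compare gradients directly instead of distances. To handle it, I would first use the semiconcavity of $f_i$ along a geodesic $xq_\ell$ of length $s_\ell$. Since $f_i(x)=f_i(q_\ell)$, the first-variation inequality gives $\langle\nabla_x f_i,\uparrow_x^{q_\ell}\rangle\ge -C s_\ell/t_i$, and the same bound holds at $q_\ell$. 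The almost-Euclidean structure at scale $t_i$ then supplies the reverse inequality. This is why we need $b_\ell\ll\epsilon t_i$, and it fixes the dependence $\delta_1,\delta_2=c(n)\epsilon$.
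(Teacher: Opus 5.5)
Your proposal follows the same route as the paper. You apply Theorem \ref{t:ann_cover} to $Y$ with an extra cap on the outer radii: the paper uses $b_\ell\le\epsilon\,\diam(Y)$ and you use $b_\ell\le\epsilon^2 t_m$, and either one gives the scale separation $s_\ell\ll t_i$. You then take $d_{q_\ell}$ as the new defining functions and get orthogonality from the argument of Lemma \ref{lem:ortho-scale}.

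Your last paragraph is more careful than the paper, which simply asserts that one may take $f_i=d_{p_i}$. The concavity argument you sketch does close that step. The lower bound at $q_\ell$, combined with the almost-splitting at scale $t_i$, forces $\angle p_i x q_\ell+\angle p_i q_\ell x\approx\pi$, and this gives the upper bound at $x$. Note that it needs a $C/t_i$-concavity bound on $f_i$, which is not part of Definition \ref{d:cvx.ann.mfd}. That bound does hold for every function the paper constructs, and the paper uses it in the proof of Theorem \ref{l:int.corner.mfd}.
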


Let $Y$ be an $\epsilon$-orthogonal corner space. We would like to perturb the defining functions $f_i$ such that the intersection angles of the level sets become acute. While this can be accomplished for every point in $Y \setminus \partial X$, it is only possible for almost every point in $Y \cap \partial X$. To establish such a full-measure result, we need to provide a quantitative estimate of the singular sets on $Y$.

Let $\epsilon>0$ and $Y$ be an $(m,\epsilon)$-orthogonal corner space with defining functions $\{f_i\}$. By construction, for every $x\in Y$, there exists a map $u\colon X\to \dR^m$ that is $(m, \epsilon)$-splitting in a neighborhood of $x$, with the splitting directions $\nabla_x f_i$ being $\epsilon$-orthogonal to $Y$. Let $N\in [m, n-2]$ be an integer and let $\eta_1>0$ be small. Then for every $x\in Y\setminus \cS^N_{\eta_1}(X)$, there exists an $(N-m, \eta_1)$-splitting function $u_1\colon X\to \dR^{N-m}$ such that the pair $(u, u_1)\colon X\to \dR^N$ is an $(N, \eta_1)$-splitting function for $x$. For any sufficiently small $\eta_2>0$, by the Annulus Covering Theorem (Theorem \ref{t:ann_cover}) or the same proof as in Lemma 6.1 \cite{LiNab20}, there exists a sufficiently small $\epsilon \in (0, \eta_1)$ such that if we cover $\left(\cS^{N+1}_{\eta_2}(X)\setminus \cS^{N}_{\eta_1}(X)\right)\cap Y$ by balls $\mathcal B = \{B_{r_j}(x_j)\}$ with each $x_j\in \cS^{N+1}_{\eta_2}(X)\cap Y$, then for any $t\in \dR^N$, there are at most $C(n,\eta_2)$ balls in $\mathcal B$ that intersect $u_1^{-1}(t)\cap Y$. Following the same argument as in Section 6 \cite{LiNab20}, we obtain the following Hausdorff measure estimates.

\begin{lemma}\label{l:intrinsic singular est} 
	Let $n\ge N\ge m\ge 0$ be integers and $Y$ be an $(m,\epsilon)$-corner space. For any $\eta>0$, there exist $\epsilon=\epsilon(n,\eta)>0$ and $C(n,\epsilon)>0$ such that the following holds. 
	\begin{enumerate}
		\renewcommand{\labelenumi}{(\roman{enumi})}
		\item  $\cH^{N-m}\left(\cS^{N}_{\eta}(X)\cap Y\right)<C(n,\epsilon)$. Therefore, $\dim_{\cH}\left(\cS^{N}_\eta(X)\cap Y\right)\le N-m$ and $\dim_{\cH}(Y)=n-m$.
		\item $\cH^{n-m-2}\left(\cS^{n-1}_{\eta}(X)\cap Y\setminus\partial X\right)<C(n,\epsilon)$. Therefore, 
		$$ \dim_{\cH}\left(\cS_\eta(X)\cap Y\setminus\partial X\right)\le \dim_{\cH}(Y)-2. $$
	\end{enumerate}
\end{lemma}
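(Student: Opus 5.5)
The plan is to adapt the quantitative stratification argument of Section 6 in \cite{LiNab20}, working on $Y$ instead of on $X$ and using the $m$ defining functions $(f_1,\dots,f_m)$ as the first $m$ coordinates of a splitting map. The basic mechanism: near any $x\in Y$, the map $u=(f_1,\dots,f_m)$ is an $(m,\epsilon)$-splitting map whose gradients are $\epsilon$-transverse to $Y$. Hence $Y$ is locally an almost-level set of an almost-submetry onto a domain of $\dR^m$, and $m$ dimensions of any ball covering of a subset of $Y$ are absorbed by the fibres. For (i) I would proceed by induction on the stratum.

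\emph{Base case.} At points $x\in Y$ where $\cS^N_\eta(X)$ is not yet refined, cover $\cS^N_\eta(X)\cap Y\cap B_1$ using Theorem \ref{t:ann_cover} applied with $\Omega=\cS^N_\eta(X)\cap Y$. Inside each good scale annulus the cone $C(\Sigma)$ is $\epsilon^2$-close. If the annulus center lies in $\cS^N_\eta$, the cone splits off at most $\dR^N$, and the almost-splitting directions $\nabla f_i$ are among its $\dR^N$ factors, up to $\epsilon$ error (this follows as in Lemma \ref{lem:ortho-scale}).

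\emph{Induction step.} Following the paragraph preceding the lemma, for $x\in Y\setminus\cS^N_{\eta_1}(X)$ the map $u$ extends to an $(N,\eta_1)$-splitting map $(u,u_1)$. By the Lemma 6.1-type statement quoted there, every fibre $u_1^{-1}(t)\cap Y$ meets at most $C(n,\eta_2)$ balls of the covering of $(\cS^{N+1}_{\eta_2}\setminus\cS^N_{\eta_1})\cap Y$. Since $(u,u_1)$ is almost a submetry onto an $N$-dimensional Euclidean ball, and $u$ is essentially constant on $Y$, the image $u_1(Y\cap B_r(x))$ is an $(N-m)$-dimensional region of size $\sim r$.

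\emph{Counting.} A packing argument in $\dR^{N-m}$, combined with the bounded multiplicity of the fibres, gives $\sum_j r_j^{N-m}\le C(n,\eta_2)$. Summing over the finitely many ($\le C(n,\epsilon)$) annuli of the covering yields $\cH^{N-m}(\cS^N_\eta(X)\cap Y)\le C(n,\epsilon)$, with $\epsilon\ll\eta$ chosen so that the $\epsilon$-errors from $u$ are dominated by $\eta$-errors at every step. Taking $N=n$ (where $\cS^n_\eta=X$) gives $\dim_\cH Y\le n-m$. The reverse inequality $\dim_{\cH}(Y)\ge n-m$ comes from regular points of $Y$: there the full $(n,\epsilon)$-splitting map restricted to $Y$ is bi-Lipschitz onto an open subset of $\dR^{n-m}$.

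For (ii), use the fact recalled in the introduction that $\cS^{n-1}(X)\setminus\cS^{n-2}(X)\subseteq\partial X$. Quantitatively: a point with an $(n-1,\eta)$-splitting tangent cone that is not near a boundary point has a cone $\eta'$-close to $\dR^n$, because a cone $\dR^{n-1}\times C(\Sigma^0)$ with $\Sigma^0$ a point is a half-space. The only interior possibility is therefore $\dR^n$. Consequently $\cS^{n-1}_\eta(X)\cap Y\setminus\partial X$ lies in $\cS^{n-2}_{\eta'}(X)\cap Y$, away from a neighborhood of $\partial X$ on which scales are controlled. Applying (i) with $N=n-2$ gives the $\cH^{n-m-2}$ bound. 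Since $\cS_\eta$ and $\cS^{n-1}_\eta$ coincide, this also gives the dimension claim.

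The main obstacle is the near-boundary, non-uniform-scale part of (ii). A point may lie very close to $\partial X$ at a small scale, so its cone looks like a half-space and it is not in $\cS^{n-2}_{\eta'}$, while it still lies in $\cS^{n-1}_\eta$ without being on $\partial X$. I would handle this by running the covering with the boundary stratum treated separately. On annuli whose cone is close to a half-space, every $\eta$-singular point in the annulus must be within distance $\ll$ scale of $\partial X$, which places it on $\partial X$ or in a lower stratum. The next step is the careful choice of $\epsilon(n,\eta)$ so that the constants do not degenerate across the finitely many induction levels.
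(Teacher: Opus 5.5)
You follow the same route as the paper. The paper's proof consists only of the paragraph preceding the lemma and a pointer to Section 6 of \cite{LiNab20}. However, your counting step, as written, does not prove (i), because its indices are shifted by one. The preceding paragraph, which you followed, already contains this slip: off $\cS^N_{\eta_1}(X)$ the tangent cone almost splits off $\dR^{N+1}$, not $\dR^N$. You bound $\sum_j r_j^{N-m}$ for a covering of $(\cS^{N+1}_{\eta_2}(X)\setminus\cS^N_{\eta_1}(X))\cap Y$, and this fails in two ways.
\begin{itemize}
\item For $\eta_1\le\eta$ this set is disjoint from $\cS^N_\eta(X)\cap Y$, so the bound says nothing about the set you need.
\item The multiplicity claim is false for it. The fibres of $u_1|_Y$, with $u_1\colon X\to\dR^{N-m}$, are $(n-N)$-dimensional, while the set can be $(N+1-m)$-dimensional, so fibres meet it along curves. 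For instance, take $X=\dR^{n-1}\times[0,\infty)$ and $N=n-2$. Then $\cS^{n-2}_{\eta_1}(X)=\varnothing$ and $\cS^{n-1}_{\eta_2}(X)=\partial X$. The fibres cut $\partial X\cap Y$ in curves, and $\sum_j r_j^{n-2-m}$ is unbounded along fine coverings of the $(n-1-m)$-dimensional set $\partial X\cap Y$.
\end{itemize}
The correct bookkeeping is as follows. At points of $Y\setminus\cS^{N-1}_{\eta_1}(X)$, the $m$ frame directions extend (for $\epsilon\ll\eta_1\ll\eta$) to an $(N,\eta_1)$-splitting map $(u,u_1)$ with $u_1\colon X\to\dR^{N-m}$. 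Cone-splitting then shows that each fibre of $u_1|_Y$ meets boundedly many balls of a Vitali-type covering of $(\cS^N_\eta(X)\setminus\cS^{N-1}_{\eta_1}(X))\cap Y$, provided the balls are centred at points that are almost conical at the ball's scale. Since $u\equiv(t_1,\dots,t_m)$ on $Y$, the open-map property of $(u,u_1)$ together with packing in $\dR^{N-m}$ gives $\sum_j r_j^{N-m}\le C$. Two things remain: producing such coverings, and controlling the part of $\cS^N_\eta(X)\cap Y$ that approaches $\cS^{N-1}_{\eta_1}(X)$, where the splitting scale degenerates. This is where Theorem \ref{t:ann_cover} (boundedly many annuli, each almost conical over its whole range of scales) has to do the work. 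The induction on $N$ can start at $N=m$, where $Y\cap\cS^{m-1}_{\eta_1}(X)=\varnothing$ because the frame already supplies $m$ almost splitting directions at every point of $Y$. Your ``base case'' paragraph only sets up the annuli and does not carry out either step.

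For (ii), the obstacle in your last paragraph does not arise. The strata are defined through tangent cones, so the shape of $B_r(x)$ for $r>0$ plays no role in membership. Suppose $x\notin\partial X$ and $T_x$ is $\eta'$-close to splitting off $\dR^{n-1}$ while $\eta$-far from $\dR^n$. For $\eta'\ll\eta$, $T_x$ would then be close to the half-space $\dR^{n-1}\times[0,\infty)$; note that a tangent cone of an $n$-dimensional Alexandrov space cannot collapse onto $\dR^{n-1}$. So $\Sigma_x$ would be close to a hemisphere, and by Perelman's stability theorem it would have boundary, i.e.\ $x\in\partial X$. Thus $\cS^{n-1}_\eta(X)\setminus\partial X\subseteq\cS^{n-2}_{\eta'}(X)$ with $\eta'=\eta'(n,\eta)$. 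This holds pointwise, however close $x$ is to $\partial X$. It is the quantitative form of the inclusion $\cS^{n-1}(X)\setminus\cS^{n-2}(X)\subseteq\partial X$ recalled in the introduction. Statement (ii) is then literally (i) with $N=n-2$ and $\eta'$ in place of $\eta$, with $\epsilon$ chosen for $\eta'$. Your first reduction was right: drop the qualifier ``away from a neighborhood of $\partial X$'' and the separate boundary-stratum covering.
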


By setting $N=n$ and $N=n-1$, respectively, we obtain the following Hausdorff measure estimates.

\begin{corollary}\label{c:vol-frame}
	Under the same assumptions as in Lemma \ref{l:intrinsic singular est}, let $k=\dim_{\cH}(Y)=n-m$. 
	\begin{enumerate}
		\renewcommand{\labelenumi}{(\roman{enumi})}
		\item  $\cH^{k}\left(Y\right)<C(n,\epsilon)\cdot\diam(Y)^{k}$.
		\item $\cH^{k-1}\left(Y\cap\partial X\right)<C(n,\epsilon)\cdot\diam(Y)^{k-1}$. 
	\end{enumerate}
\end{corollary}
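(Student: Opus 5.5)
The plan is to read off both estimates from Lemma \ref{l:intrinsic singular est} by setting $N=n$ in part (i) and $N=n-1$ in part (i), and then to insert the correct power of $\diam(Y)$ by rescaling. The rescaling is justified because every defining notion (good scale annuli, $\epsilon$-frames, orthogonality, the quantitative strata $\cS^N_\eta$) is scale invariant, while the lower curvature bound $-1$ only improves under blow-up.

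For (i), we use $\cS^n_\eta(X)=X$, since no tangent cone splits off $\dR^{n+1}$. Taking $N=n$ in Lemma \ref{l:intrinsic singular est}(i) gives $\cH^{n-m}(Y)=\cH^{k}(Y)<C(n,\epsilon)$ for any corner space at unit scale. To obtain the factor $\diam(Y)^k$, let $r=\diam(Y)\le 2$, since $Y\subseteq B_1$, and rescale the metric by $r^{-1}$. The rescaled space $r^{-1}X$ lies in $\Alex^n(-r^2)\subseteq\Alex^n(-4)$. The functions $r^{-1}f_i$ define level sets with the same conditions (i)--(iii) of Definition \ref{d:cvx.ann.mfd}, because those conditions are homogeneous of degree one or zero in distances. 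Hence $Y$ becomes an $(m,\epsilon)$-corner space of diameter $1$, so it lies in a ball of radius $1$. We then apply part (i) of Lemma \ref{l:intrinsic singular est} in the rescaled space; a bound of $-4$ instead of $-1$ only changes constants, or one may rescale once more by a bounded factor. Scaling back, $\cH^k$ picks up the factor $r^k$.

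For (ii), we use the standard fact that $\partial X\subseteq\cS^{n-1}_{\eta_0}(X)$ for a dimensional $\eta_0>0$: a boundary point has tangent cone $C(\Sigma_x)$ with $\partial\Sigma_x\neq\varnothing$, which is uniformly far from $\dR^n$. Applying Lemma \ref{l:intrinsic singular est}(i) with $N=n-1$ and $\eta\le\eta_0$ gives $\cH^{n-1-m}(Y\cap\partial X)\le\cH^{k-1}(\cS^{n-1}_\eta(X)\cap Y)<C(n,\epsilon)$. The same rescaling argument then produces the factor $\diam(Y)^{k-1}$.

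The main obstacle is checking that the dependence of $\epsilon$ on $\eta$ is compatible here: $\eta$ must be fixed dimensionally, $\eta=\eta_0(n)$, so that $C(n,\epsilon)$ depends only on $n$ and $\epsilon$. A second point to check is that rescaling preserves the hypotheses of Lemma \ref{l:intrinsic singular est}, in particular the frame conditions on scale ranges $[\sigma^6a,\sigma^{-6}b]$, which are dilation invariant. If the boundary inclusion above needed quantitative care, I would instead argue via the almost-splitting: at a boundary point, an $(n,\eta)$-splitting map would force $\Sigma_x$ to be close to $\mathbb S^{n-1}$, which by Perelman stability has no boundary, a contradiction for small $\eta$.
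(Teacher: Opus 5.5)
Your proposal is correct and follows the paper's route: the paper obtains both estimates simply by taking $N=n$ and $N=n-1$ in Lemma \ref{l:intrinsic singular est}(i). This is exactly what you do, using $\cS^n_\eta(X)=X$ and $\partial X\subseteq\cS^{n-1}_\eta(X)$ for $\eta\le\eta_0(n)$, while your rescaling by $\diam(Y)^{-1}$ to produce the factors $\diam(Y)^k$ and $\diam(Y)^{k-1}$, and your remark that $\eta$ must be small depending only on $n$, spell out details the paper leaves implicit.
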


\begin{remark}
	Indeed, through a more sophisticated analysis, one can show that $\cH^{k}\left(Y\right)\le C(n)\cdot\diam(Y)^{k}$ and $\cH^{k-1}\left(Y\cap\partial X\right)\le C(n)\cdot\diam(Y)^{k-1}$. We omit the proof here, as Corollary \ref{c:vol-frame} suffices for our purposes. 
\end{remark}

\begin{remark}\label{r:non-frame}
	According to the covering theorem above, on each $(m,\epsilon)$-corner space $Y$ with $\diam(Y)\le 1$, there are at most $C(n,\epsilon)$ points that do not belong to any $(m+1,\epsilon)$-frame. These points precisely correspond to the centers of the $\epsilon$-annuli in the covering of $Y$. Coupled with the local splitting structure within the annular regions, this implies that, away from a set of codimension 2, every point in $X\setminus\partial X$ admits an $(n,\epsilon)$-frame. 
\end{remark}

As mentioned earlier, the angle between $\nabla_xd_{p_i}$ and $\nabla_xd_{p_j}$ for $j>i$ is close to $\frac{\pi}{2}$ whenever $d(p_j,x)=t_j<\epsilon t_i$. To perturb this intersection angle, we need to push the level set toward $p_i$ along the direction $-\nabla_x d_{p_i}$. However, the function $-d_{p_i}$ cannot be used directly in the perturbation because it is not semi-concave. The following lemma resolves this issue. 

\begin{lemma}\label{l:opp-dist}
	There exists a constant $c(n)>0$ such that the following holds. Let $A_{a}^{b}(p)$ be an $(\epsilon, \sigma)$-good scale annulus. Then there exists a $\frac{c(n)}{b}$-concave function $\rho$ on $B_{\sigma^{-1}b}(p)$ satisfying the following properties:
	\begin{enumerate}
		\renewcommand{\labelenumi}{(\roman{enumi})}
		\setlength{\itemsep}{1pt}
		\item $|\rho(x)+d_p(x)|\le \epsilon d_p(x)$ for every $x\in A_{\sigma a}^{\sigma^{-1}b}(p)$. Consequently, for any $\sigma a\le s\le t\le \sigma^{-1}b$, we have
		\begin{enumerate}
			\renewcommand{\labelenumii}{(\alph{enumii})}
			\item $\partial {\mathcal W}_s(p)=\rho^{-1}(-s)\subseteq A_{(1-\epsilon^{1.5})s}^{(1+\epsilon^{1.5})s}(p)$;
			\item $A_{(1+\epsilon^{1.5})s}^{(1-\epsilon^{1.5})t}(p)\subseteq {\mathcal W}_{s,t}(p) = \{x\colon -t\le \rho(x)\le -s\}
			\subseteq A_{(1-\epsilon^{1.5})s}^{(1+\epsilon^{1.5})t}(p)$.
		\end{enumerate}
		\item $\big||\nabla_x\rho|-1\big|<\epsilon$ and $\big|\langle \nabla_x\rho, \nabla_x d_p\rangle+1\big|< \epsilon$ for every $x\in A_{\sigma a}^{\sigma^{-1}b}(p)\setminus\{p\}$.
	\end{enumerate}
\end{lemma}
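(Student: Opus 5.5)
We construct $\rho$ by averaging distance functions from points lying ``beyond'' $p$ in the opposite direction, using the almost-cone structure at large scale. Distance functions to far points are semiconcave with constant of order one over the distance, so the result will be $c(n)/b$-concave on $B_{\sigma^{-1}b}(p)$.

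\textbf{Construction.} Fix the scale $R=\sigma^{-3}b$. By condition (ii) of the good scale annulus definition applied with $s=\sigma^{-6}b$, the ball $B_{\sigma^{-6}b}(p)$ is $\epsilon^2\sigma^{-6}b$-close to a ball in the cone $C_{p^*}(\Sigma)$. Take a finite, well-separated family of points $\{q_\alpha\}_{\alpha=1}^{N}$ in the sphere $\partial B_R(p)$, with $N\le N(n)$; for instance, let them be $\theta$-dense, with $\theta$ a small dimensional constant. Define
\[
\rho(x)=\min_\alpha\big(d_{q_\alpha}(x)-R\big).
\]
Each $d_{q_\alpha}$ is $\frac{c(n)}{R}$-concave on $B_{\sigma^{-1}b}(p)$ by Toponogov comparison, since $d(q_\alpha,x)\ge R/2$ there. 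A minimum of $\lambda$-concave functions is again $\lambda$-concave. Hence $\rho$ is $\frac{c(n)}{b}$-concave, and in fact much better, because $R\gg b$.

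\textbf{Comparison with $-d_p$.} For $x\in A_{\sigma a}^{\sigma^{-1}b}(p)$, in the model cone the minimizing $q_\alpha$ is the one whose direction is nearly antipodal to the direction of $x$. The law of cosines in the cone gives
\[
d(q,x)=\sqrt{R^2+r^2-2Rr\cos\phi}\approx R-r\cos\phi,
\]
where $r=d_p(x)$ and $\phi$ is the angle between the directions of $q$ and $x$. The error is of order $r^2/R\le \sigma^2 r$. Hence $\rho(x)=-r\max_\alpha\cos\phi_\alpha+O(\sigma^2 r)$, plus the Gromov--Hausdorff error.

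\textbf{Main obstacle.} The naive GH error at scale $R$ is about $\epsilon^2 R$, which is not small relative to $r$ when $r\ll R$. Two fixes are needed.
\begin{enumerate}
\item Make $\rho$ scale-sensitive. Use the cone closeness at every scale $s\in[\sigma^6a,\sigma^{-6}b]$, so that the directions to $q_\alpha$ seen from $x$ behave well at scale $r$. Concretely, for $x$ at distance $r$, apply first variation and the almost-splitting along the ray $p q_\alpha$: the segment from $x$ toward $q_\alpha$ passes within $\epsilon r$ of $p$ by a quantitative splitting argument (as in Lemma~\ref{lem:ortho-scale}, via Lemma~5.6 of \cite{BGP}). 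Condition (ii) at scale $s=r$ shows that $p$ lies almost on a segment from $x$ to a far point, so the comparison angle $\tilde\angle xpq$ is within $\epsilon$ of $\pi$ for a suitable $q$ in the radial continuation of $x$.
\item Choose the points $q_\alpha$ densely enough. Pick $\{q_\alpha\}$ as an $\epsilon$-net of $\partial B_R(p)$ rather than a $\theta$-net, accepting a count $N(n,\epsilon)$; the count does not affect the concavity constant. Then $\max_\alpha\cos\phi_\alpha\ge 1-\epsilon^2$, which yields $|\rho+d_p|\le\epsilon d_p$.
\end{enumerate}

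\textbf{Gradient estimates.} At $x$, the gradient $\nabla_x\rho$ is the minimum over the active $\alpha$ of $\nabla_x d_{q_\alpha}$. By first variation, $\nabla_x d_{q_\alpha}$ is the negative of the direction toward $q_\alpha$. From the point above, the direction toward $q_\alpha$ is $\epsilon$-close to the direction toward $p$, which is $-\nabla_x d_p$ up to an almost-unique choice since $\tilde\angle xpq\approx\pi$. This gives $|\langle\nabla_x\rho,\nabla_xd_p\rangle+1|<\epsilon$ and $||\nabla_x\rho|-1|<\epsilon$. Here one uses that the space of directions at $x$ is almost a suspension in the $d_p$-direction, so there is little room for competing directions.

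\textbf{Level-set inclusions.} Items (a) and (b) follow immediately from (i), since $(1\pm\epsilon)$ multiplicative bounds on $\rho$ relative to $-d_p$ give $\rho^{-1}(-s)\subseteq A_{(1-\epsilon^{1.5})s}^{(1+\epsilon^{1.5})s}(p)$. To get the exponent $1.5$, run the construction with $\epsilon^2$ in place of $\epsilon$ in the estimates, which is legitimate because the GH closeness is $\epsilon^2s$.
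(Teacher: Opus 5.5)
Your function is essentially the paper's: a minimum of distance functions to net points on a far sphere, minus the radius. The paper also averages over a finer cluster $\mathcal Q^\alpha$ around each $q_\alpha$, but that plays no role in the stated properties. The concavity claim and the lower bound $\rho\ge -d_p$ are fine; the latter is exact by the triangle inequality $d_q(x)\ge R-d_p(x)$.

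\textbf{The gap.} The geometry you use for the upper bound in (i) and for (ii) has the configuration reversed. The active net point at $x$ lies in the radial continuation of $x$; your own formula $\rho(x)\approx -r\max_\alpha\cos\phi_\alpha$ selects $\phi_\alpha\approx 0$, not an antipodal direction. So what must be shown is that $x$ lies almost on a segment $[pq_\alpha]$, i.e.\ $\tilde{\angle} xpq_\alpha\approx 0$ and $\tilde{\angle} pxq_\alpha\approx\pi$. Toponogov then gives $d_{q_\alpha}(x)\le R-d_p(x)\cos\tilde{\angle} xpq_\alpha$, which is the paper's route via $\tilde{\angle} xpq^\beta_\alpha\le 60\epsilon^2$.

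Your fix (1) instead asserts that $p$ lies almost on a segment from $x$ to a far point, with $\tilde{\angle} xpq\approx\pi$. Such a $q$ has $d_q(x)-R\approx +d_p(x)$, the wrong sign. It also generally does not exist: $p$ is only a cone vertex, and a cone over $\Sigma$ with $\operatorname{diam}\Sigma<\pi$ has no almost-antipodal directions, so there is no splitting through $p$ to invoke. Moreover, condition (ii) at scale $s\approx r$ only controls $B_s(p)$ and cannot produce a point at distance $R\gg r$. So the multiscale obstacle you correctly identified is not resolved. Fix (2), a finer net, does not help without it: when $r<\epsilon^2R$, the whole ball $B_r(p)$ lies inside the Gromov--Hausdorff error at scale $R$, so the "direction of $x$" is not seen there.

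\textbf{Effect on the gradient estimate.} The same reversal breaks it. If the direction from $x$ to $q_\alpha$ were close to $\uparrow_x^p$, first variation would give $\nabla_x d_{q_\alpha}\approx -\uparrow_x^p\approx\nabla_x d_p$, hence $\langle\nabla_x\rho,\nabla_x d_p\rangle\approx +1$. In addition, the gradient of a minimum is not the minimum of the gradients. The appeal to $\Sigma_x$ being almost a suspension does not substitute for an argument.

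\textbf{How the paper avoids this.} It never computes $\nabla\rho$ directly.
\begin{itemize}
\item Take $y\neq x$ on a segment $[xp]$ and an index $\alpha$ active at $y$, not at $x$.
\item The good-scale structure gives $\tilde{\angle} pyq_\alpha\approx\pi$, hence $\tilde{\angle} pxq^\beta_\alpha\approx\pi$. By Toponogov (monotonicity of comparison angles), $\tilde{\angle} yxq^\beta_\alpha\ge\tilde{\angle} pxq^\beta_\alpha$.
\item This yields $d_{q^\beta_\alpha}(y)\ge d_{q^\beta_\alpha}(x)+(1-20\epsilon^2)d(x,y)$. Since $\rho(x)\le\rho_\alpha(x)$, it follows that $\rho(y)\ge\rho(x)+(1-20\epsilon^2)d(x,y)$.
\item Together with the $1$-Lipschitz bound, this gives $\bigl||\nabla_x\rho|-1\bigr|\le 20\epsilon^2$.
\item Taking $y=p$ and using concavity of $\rho$ gives $\langle\nabla_x\rho,\uparrow_x^p\rangle\ge 1-20\epsilon^2$. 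The bound on $\langle\nabla_x\rho,\nabla_x d_p\rangle$ follows because $\uparrow_x^p$ is almost $-\nabla_x d_p$.
\end{itemize}
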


\begin{proof}
	Let $r=\sigma^{-2} b$. Let $\mathcal{Q}=\{q_\alpha\}$ be a maximal collection of points in $A_r^r(p)$ such that $d(q_{\alpha_1}, q_{\alpha_2})\ge\delta_0 r$ for all $\alpha_1\neq\alpha_2$. For each $\alpha$, let $\mathcal{Q}^\alpha=\{q_\alpha^\beta\}$ be a maximal collection of points in $B_{\delta_0 r}(q_\alpha)\cap A_r^r(p)$ such that $d(q_\alpha^{\beta_1}, q_\alpha^{\beta_2})\ge\delta_1\delta_0 r$ for all $\beta_1\neq\beta_2$. Define 
	\begin{align}
		\rho_\alpha(x)=\frac{1}{|\mathcal{Q}^\alpha|}\sum_{q_\alpha^\beta\in\mathcal{Q}^\alpha} d_{q_\alpha^\beta}(x)-r
		\label{def:concave-func.e1}
	\end{align}
	and $\displaystyle \rho(x)=\min_\alpha \rho_\alpha(x)$. 
	
	(i) For any $q\in \partial B_{r}(p)$ and $x\in B_{\sigma^{-1}b}(p)$, we have
	\begin{align}
		d_{q_\alpha^\beta}(x)\ge d_p({q_\alpha^\beta})-d_p(x)=r-d_p(x).
		\label{l:cox-haull.e8}
	\end{align}
	Thus, $\rho_\alpha(x)\ge -d_p(x)$ for every $\alpha$, which implies 
	\begin{align}
		\rho(x)\ge -d_p(x).
		\label{l:cox-haull.e9}
	\end{align}
	On the other hand, for any $x\in A_{0.5a}^{2b}(p)$, the good scale annulus structure ensures there exists a $q_\alpha\in \mathcal{Q}$ such that $\tilde{\angle} xp q_\alpha \le 10(\delta_0+\epsilon^2)$. Moreover, 
	\begin{align}
		\tilde{\angle} xp q_\alpha^\beta \le 10(\delta_0+\epsilon^2+\delta_0\delta_1)\le 60\epsilon^2
		\label{l:cox-haull.e10}
	\end{align} 
	for every $q_\alpha^\beta\in\mathcal{Q}^\alpha$, provided $0<\delta_0\le 4\epsilon^2$. By triangle comparison, this implies
	\begin{align}
		d_{q_\alpha^\beta}(x)\le r-d_p(x)\cos(60\epsilon^2).
		\label{l:cox-haull.e11}
	\end{align}
	Thus, we have
	\begin{align}
		d_{q_\alpha^\beta}(x)-r \le -\cos(60\epsilon^2)d_p(x)
		\le -(1-4\epsilon^2)d_p(x).
		\label{l:cox-haull.e12}
	\end{align}
	Therefore,     
	\begin{align}
		\rho(x)\le \rho_\alpha(x)\le -(1-4\epsilon^2)d_p(x).
		\label{l:cox-haull.e13}
	\end{align}
	Combining (\ref{l:cox-haull.e9}) and (\ref{l:cox-haull.e13}), we obtain 
	\begin{align}
		|\rho(x)+d_p(x)|\le 4\epsilon^2 d_p(x).
		\label{l:cox-haull.e14}
	\end{align}
	In particular, $\rho^{-1}(-s)\subseteq A_{(1-4\epsilon^2)s}^{(1+4\epsilon^2)s}(p)$ for any $\sigma a\le s\le \sigma^{-1}b$.
	
	(ii) Now we show that $\big||\nabla_x\rho|-1\big|<10\epsilon^2$ for any $x\in A_{\sigma a}^{\sigma^{-1}b}(p)\setminus\{p\}$. Let $\alpha_0$ be an index for which $\rho(x)=\rho_{\alpha_0}(x)$. Then for any $y$, the triangle inequality yields 
	\begin{align}
		\rho(y)-\rho(x)&\le \rho_{\alpha_0}(y)-\rho_{\alpha_0}(x)\le d(x,y).
		\label{l:cox-haull.e50}
	\end{align}
	This implies that $|\nabla_x\rho|\le 1$ for every $x\in B_{\sigma^{-1}b}(p)$. 
	
	Let $y\neq x$ be a point on a geodesic connecting $p$ and $x$. Let $\alpha$ be an index for which $\rho(y)=\rho_\alpha(y)$. By the good scale structure, we have $\tilde{\angle} p y q_\alpha \ge \pi-2\epsilon^2-2\delta_0$. Therefore, $\tilde{\angle} p x q_\alpha \ge \pi-6\epsilon^2-6\delta_0$, and furthermore, $\tilde{\angle} p x q_\alpha^\beta \ge \pi-10\epsilon^2-10\delta_0$ for every $q_\alpha^\beta\in \mathcal{Q}^\alpha$. By Toponogov's comparison theorem, 
	\begin{align}
		\tilde{\angle} y x q_\alpha^\beta \ge \tilde{\angle} p x q_\alpha^\beta \ge \pi-10\epsilon^2-10\delta_0.
	\end{align} 
	Thus, 
	\begin{align}
		d_{q_\alpha^\beta}(y)\ge d_{q_\alpha^\beta}(x)+(1-20\epsilon^2)\cdot d(x,y).
	\end{align}
	Averaging this inequality over $q_\alpha^\beta\in \mathcal{Q}^\alpha$, we get 
	\begin{align}
		\rho(y)&=\rho_\alpha(y)\ge \rho_\alpha(x)+(1-20\epsilon^2)\cdot d(x,y) \notag \\
		&\ge \rho(x)+(1-20\epsilon^2)\cdot d(x,y).
		\label{l:cox-haull.e60}
	\end{align}
	This implies that $|\nabla_x\rho|\ge 1-20\epsilon^2$ for every $x\in A_{\sigma a}^{\sigma^{-1}b}(p)$. 
	
	Choosing $y=p$ in \eqref{l:cox-haull.e60}, we obtain
	\begin{align}
		\rho(p)\ge \rho(x)+(1-20\epsilon^2)\cdot d_p(x).
		\label{l:cox-haull.e70}
	\end{align}
	By the concavity of $\rho$, we have
	\begin{align}
		\langle \nabla_x\rho, \uparrow_x^p \rangle \ge \frac{\rho(p)-\rho(x)}{d_p(x)}\ge 1-20\epsilon^2.
	\end{align}
	Note that $\big|\langle \nabla_x d_p, \uparrow_x^p \rangle+1\big|<2\epsilon^2$. We conclude $\big|\langle \nabla_x\rho, \nabla_x d_p \rangle+1\big|< 100\epsilon^2$.
\end{proof}

Now we perturb an $\epsilon$-orthogonal frame into an a.e.-acute frame.

\begin{lemma}\label{l:frame.app.cute}
	Let $\mathcal{F} = \{\partial W^1_{t_1}(p_1), \partial W^2_{t_2}(p_2),\dots, \partial W^m_{t_m}(p_m)\}$ be an $(\epsilon, \delta, \delta_1, \delta_2, f_1, \dots, f_m)$-orthogonal frame, where $\delta, \delta_i$ are of the form $c(n)\epsilon$. Then there exists an a.e.-acute $(\epsilon, g_1, \dots, g_m)$-frame 
	$$\mathcal{G} = \{\partial \widetilde{W}^1_{t_1}(p_1), \partial \widetilde{W}^2_{t_2}(p_2),\dots, \partial \widetilde{W}^m_{t_m}(p_m)\}$$
	such that 
	\begin{enumerate}
		\renewcommand{\labelenumi}{(\roman{enumi})}
		\setlength{\itemsep}{1pt}
		\item  $W_{(1+\epsilon)a_i }^{(1-\epsilon)b_i}(q)\subseteq \widetilde{W}_{a_i}^{b_i}(q)\subseteq W_{(1-\epsilon)a_i}^{(1+\epsilon)b_i}(q)$,
		\item $\partial \widetilde{W}_{t}(q)\subseteq W_{(1-\epsilon)t}^{(1+\epsilon)t}(q)$ for every $t\in[a_i, b_i]$.
	\end{enumerate}	
	Furthermore, if the frame $\{\partial W^1_{t_1}(p_1), \partial W^2_{t_2}(p_2), \dots, \partial W^k_{t_k}(p_k)\}$ is a.e.-acute for some $k \le m$, then we can choose $g_i = f_i$ for all $1 \le i \le k$.
\end{lemma}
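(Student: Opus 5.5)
We construct the functions $g_i$ by induction on $i$. Each $f_i$ is replaced by a convex combination of $f_i$ and the reversed distance functions $\rho_j$ (Lemma \ref{l:opp-dist}) attached to the earlier centers $p_j$, $j<i$. Since $\rho_j\approx -d_{p_j}$ and $\nabla\rho_j\approx-\nabla d_{p_j}$, mixing a small multiple of $\rho_j$ into $f_i$ tilts $\nabla g_i$ slightly toward $-\nabla f_j$, and this turns the near-right angles into acute ones. Concretely, we set
\begin{align*}
	g_i=(1-\lambda)f_i+\frac{\lambda}{i-1}\sum_{j<i}\bigl(\rho_j+t_j\bigr)\cdot\frac{t_i}{t_j},
\end{align*}
normalized so that $g_i(x)\approx d_{p_i}(x)$ on the relevant corner. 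Here $\lambda=c(n)\epsilon$ is chosen much larger than the orthogonality defect $\delta_2$ but still small. Because $f_i$ and $\rho_j$ are semiconcave, with concavity constant $c(n)/b_j$ on the large ball from Lemma \ref{l:opp-dist}, $g_i$ is semiconcave. If the first $k$ functions already give an a.e.-acute frame, we keep $g_i=f_i$ for $i\le k$ and start the modification at $i=k+1$.

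Next we verify the properties. For conditions (i)--(iii) of Definition \ref{d:cvx.ann.mfd} and the containments (i), (ii) of the lemma, we use $|f_i-d_{p_i}|\le\delta d_{p_i}$. On the relevant region the added terms change $g_i$ by at most $O(\lambda)t_i$: indeed $\rho_j+t_j\approx t_j-d_{p_j}$, which vanishes to first order on $\partial W^j_{t_j}$, and near $Y$ this deviation is controlled because $x$ lies within $O(t_i)$ of $\partial W^j_{t_j}$. The gradient conditions follow from $\langle\nabla d_{p_j},\nabla f_i\rangle\approx0$ (the old orthogonality). These give $|\nabla g_i|\approx1$ and $\langle\nabla g_i,\nabla d_{p_i}\rangle\approx1$ with errors $O(\lambda+\epsilon)$, which is of the form $c(n)\epsilon$.

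For acuteness between level sets, we compute at $x\in Y$ for $j<i$:
\begin{align*}
	\langle\nabla g_i,\nabla g_j\rangle\approx(1-\lambda)\langle\nabla f_i,\nabla f_j\rangle-\tfrac{\lambda}{i-1}\,\tfrac{t_i}{t_j}\,|\nabla f_j|^2+\dots
\end{align*}
The correction is strictly negative, but the factor $t_i/t_j\le\epsilon$ threatens to make it smaller than $\delta_2$. To avoid this, we drop the factor $t_i/t_j$: near $Y$ the function $\rho_j+t_j$ is small, so its size poses no problem. With this choice the gradient correction is $-\lambda/(i-1)$, and we take $\lambda\gg\delta_2$. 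Getting the scalings consistent, so that the $C^0$ closeness to $d_{p_i}$ holds while the gradient tilt is of order $\lambda\gg\delta_2$, is the first delicate point.

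The boundary condition (ii)-(b) is the main obstacle. We need $\langle\nabla g_i,\nabla d_{\partial X}\rangle<0$ at $\mathcal H^{n-m-1}$-a.e. $x\in Y\cap\partial X$. There $\nabla d_{\partial X}$ need not exist pointwise, and the tilt must be toward the interior. We add a further small multiple $\lambda'$ of a concave function that approximates $d_{\partial X}$ near regular boundary points, which tilts $\nabla g_i$ away from the boundary. At points of $\partial X\setminus\mathcal S^{n-2}_\eta(X)$ the tangent cone almost splits off $\mathbb R^{n-2}$ times a half-plane (or a thin cone), and there $\nabla d_{\partial X}$ is almost determined, so the inequality holds by the same first-variation computation. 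The exceptional boundary set $\mathcal S^{n-2}_\eta(X)\cap Y\cap\partial X$ has $\mathcal H^{n-m-2}$-finite measure by Lemma \ref{l:intrinsic singular est}(i) with $N=n-2$. Hence it is $\mathcal H^{n-m-1}$-null, which gives the a.e. statement. Together with the bounds above, this shows that $\mathcal G$ is an a.e.-acute $(m,\epsilon)$-frame.
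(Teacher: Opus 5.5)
Your construction is the paper's. In the inductive step the paper replaces $f_{m+1}$ by
\[
g=f_{m+1}+30\tilde\epsilon\sum_{i\le m}\bigl(\rho_i-\rho_i(p_{m+1})\bigr)+10\tilde\epsilon\bigl(d_{\partial X}-d_{\partial X}(p_{m+1})\bigr),
\]
checks $-50\tilde\epsilon\le\langle\nabla g,\nabla f_j\rangle\le-10\tilde\epsilon$, and gets the boundary condition off $\mathcal S^{n-2}_{\epsilon_1}(X)\cap\partial X$. That set is negligible by Lemma \ref{l:intrinsic singular est}(i) with $N=n-2$. Your remark that this exceptional set does not depend on the corner, so it is also null in every later corner, is a good one. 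However, two steps fail as you wrote them.

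First, the ``delicate point'' is created by your normalization. With $\rho_j+t_j$, the leftover constant $\rho_j(p_i)+t_j$ is only $O(\epsilon t_j)$, so $g_i$ does not vanish at $p_i$. Condition (i) of Definition \ref{d:cvx.ann.mfd} must hold on the whole annulus $W^{\sigma^{-4}b_i}_{\sigma^4a_i}(p_i)$, where $d_{p_i}$ goes down to $\sigma^4a_i$; moreover, every level $t\in[a_i,b_i]$ is used later in Theorem \ref{t:cov-angle}. So neither ``near $Y$'' nor a bound of size $O(\lambda)t_i$ is enough. The fix is to normalize at the center, $\rho_j(x)-\rho_j(p_i)$, as the paper does. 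Each $\rho_j$ is a minimum of averages of distance functions, hence $1$-Lipschitz, and so is $d_{\partial X}$. The perturbation is therefore at most $c(n)\lambda\,d_{p_i}(x)$ everywhere. Likewise $|\nabla g_i-\nabla f_i|\le c(n)\lambda$ gives (ii) and (iii) directly, with no appeal to orthogonality. There is then no conflict between $C^0$ closeness and a tilt of size $\lambda/(i-1)$.

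Second, the boundary step has three problems.
\begin{enumerate}
\item By definition, acuteness at $\partial X$ requires $-\delta_2\le\langle\nabla g_i,\nabla d_{\partial X}\rangle\le-\delta_1$ with $\delta_1<\delta_2<0$, i.e.\ a \emph{positive} inner product. This is what yields $\langle u,\overset\rightarrow{N}\rangle\le0$ in Theorem \ref{l:int.corner.mfd}. Your target ``$<0$'' contradicts your own, correct, tilt toward $\nabla d_{\partial X}$.
\item The phrase ``the same first-variation computation'' hides the actual estimate. At $x\in\partial X$ you need $|\langle\nabla f_i,\nabla d_{\partial X}\rangle|=O(\epsilon)$ and $|\langle\nabla\rho_j,\nabla d_{\partial X}\rangle|=O(\epsilon)$. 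Both hold at every boundary point, since these are almost-splitting directions and $\nabla d_{\partial X}$ is almost orthogonal to such directions. You also need $|\nabla d_{\partial X}|$ bounded below, and this is the only place where regularity of $x$ enters. Thin-cone boundary points have small $|\nabla d_{\partial X}|$, so they belong to the exceptional set, not the good one as your parenthetical suggests.
\item The constants must be ordered. The coefficient $\lambda'$ has to dominate the $O(\epsilon)$ boundary defect, but stay below $\lambda/(i-1)$ minus the orthogonality defect. The reason is that at interior points of the corner $\langle\nabla d_{\partial X},\nabla g_j\rangle$ can be of order one, so the boundary term shifts the interior angles by up to $\lambda'$. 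The paper's coefficients $\tilde\epsilon<10\tilde\epsilon<30\tilde\epsilon$ encode exactly this hierarchy.
\end{enumerate}
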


\begin{proof}
	We proceed by induction on $m$. The case of $m=1$ is trivial. Suppose the statement holds for some $1\le m\le n-1$. That is, we can assume that $\mathcal{F}_m = \{\partial W^1_{t_1}(p_1), \partial W^2_{t_2}(p_2),\dots, \partial W^m_{t_m}(p_m)\}$ is an $(\epsilon, \delta, \delta_1, \delta_2, f_1, \dots, f_m)$-a.e.-acute frame for  $\delta_1<\delta_2<0$, and 
	\begin{align}
		\left|\left\langle \nabla_x f_{m+1}, \nabla_xf_i \right\rangle\right|\le \tilde{\epsilon}
		\label{t:adj-angle.e4}
	\end{align}
	for every $1\le i\le m$ and $x\in\partial W_{t_{m+1}}\cap \partial W_{t_{i}}$. Here $\tilde{\epsilon}>0$ is of the form $c(n)\epsilon$. 
	
	For each good scale annulus $A_{a_i}^{b_i}(p_i)$, let $\rho_i(x)$ denote the approximation of $-d_{p_i}(x)$ defined in Lemma \ref{l:opp-dist}. It is clear that $\left|\left\langle \nabla_x\rho_i, \nabla_x d_{p_i}\right\rangle+1\right|< \epsilon$ for any $x\in Y=\bigcap_{i=1}^m \partial W_{t_i}^i(p_i)$. We approximate $f_{m+1}(x)$ by 
	\begin{align}
		g(x)= f_{m+1}(x)+30\tilde{\epsilon}\cdot\sum_{i=1}^m \left(\rho_i(x)- \rho_i(p_{m+1})\right) + C\tilde{\epsilon}\Big( d_{\partial X}(x)-d_{\partial X}(p_{m+1})\Big),
		\label{t:adj-angle.e22}
	\end{align}
	which is a perturbation of $f_{m+1}(x)$ by functions whose Hessians are bounded from above by $\frac{c(n)}{t_i}$ and $c(n)$ respectively. Here $C=10$ if $\partial X\neq\varnothing$, otherwise $C=0$. In the following we assume $\partial X\neq\varnothing$. The case of $\partial X=\varnothing$ follows similarly.

	It is easy to see that Definition \ref{d:cvx.ann.mfd} is satisfied and $(W_a^b(p_{m+1}), g)$ is an $(\epsilon, \sigma, 40\tilde{\epsilon})$-adjusted annulus. Therefore, $\widetilde{W}_{a_\ell}^{b_\ell}(p_{m+1})$, defined by $g$, is an adjusted $\epsilon$-good scale annulus, and (i) and (ii) are satisfied for $i=m+1$. 
	
	Now we prove the acuteness. Set $\delta_0=10(\epsilon+\delta+|\delta_1|+|\delta_2|)$. Let $t\in[a_{m+1}, b_{m+1}]$ and $x\in \partial W^{m+1}_t\cap Y$. For any $1\le i\le m$, since $(W^i,p_i)$ is an $(f_i,\epsilon,\sigma,\delta)$-annulus and $p_{m+1}\in Y\subseteq\partial W^i_{t_i}$, we have 
	\begin{align}
		\left|\left\langle \nabla_x \rho_i, \nabla_x f_i\right\rangle+1\right|\le \delta_0
		\label{t:adj-angle.e5}
	\end{align}
	for every $1\le i\le m+1$. By the inductive hypothesis, we have
	\begin{align}
		\left|\left\langle \nabla_x \rho_j, \nabla_x f_k\right\rangle\right|\le 2\delta_0
		\label{t:adj-angle.e6}
	\end{align}
	for $1\le j\neq k\le m$. 
	
	Combining (\ref{t:adj-angle.e22}) -- (\ref{t:adj-angle.e6}), we obtain
	\begin{align}
		\left\langle \nabla_xg, \nabla_x f_j\right\rangle
		& \le \left|\left\langle \nabla_x f_{m+1}, \nabla_x f_j\right\rangle\right|
		+ \left\langle \nabla_xg - \nabla_x f_{m+1}, \nabla_x f_j\right\rangle
		\notag\\     
		& \le \tilde{\epsilon}+30\tilde{\epsilon}\sum_{i=1}^m \left\langle \nabla_x \rho_i, \nabla_x f_j\right\rangle 
		+10\tilde{\epsilon}\left\langle \nabla_xd_{\partial X}, \nabla_x f_j\right\rangle
		\notag\\
		&\le \tilde{\epsilon}+30\tilde{\epsilon}\big(m\cdot 2\delta_0-(1-\delta_0)\big)+10\tilde{\epsilon}
		\notag\\
		&\le -10\tilde{\epsilon}, \label{t:adj-angle.e7.2}
	\end{align}
	provided $\delta_0<\frac{1}{100n}$. Similarly, 
	\begin{align}
		\left\langle \nabla_x g, \nabla_x f_j\right\rangle
		& \ge -\left|\left\langle \nabla_x f_{m+1}, \nabla_x f_j \right\rangle\right|
		+ \left\langle \nabla_xg - \nabla_x f_{m+1}, \nabla_x f_j \right\rangle
		\notag\\     
		& \ge -\tilde{\epsilon} + 30\tilde{\epsilon}\sum_{i=1}^m \left\langle \nabla_x  \rho_i, \nabla_x f_j\right\rangle-10\tilde{\epsilon}\left\langle \nabla_xd_{\partial X}, \nabla_x f_j\right\rangle
		\notag\\
		&\ge -\tilde{\epsilon}+30\tilde{\epsilon}\big(-m\cdot2\delta_0-(1+\delta_0)\big)-10\tilde{\epsilon}
		\notag\\
		&\ge -50\tilde{\epsilon}.
		\label{t:adj-angle.e7.1}
	\end{align}
	
	Let $\dim_{\cH}(Y)=k=n-m$. Note that $\dim_{\cH}(\partial \widetilde{W}^{m+1}\cap Y)=k-1$ and $\dim_{\mathcal{H}}(\partial \widetilde{W}^{m+1}\cap Y\cap \partial X)=k-2$. We claim that:
	\begin{enumerate}
		\item $|\left\langle \nabla_x f_{m+1}, \nabla_xd_{\partial X}\right\rangle|\le 4\epsilon$ for every $x\in \partial \widetilde{W}^{m+1}\cap \partial X$;
		\item For any $\epsilon_1 >0$, there exists $\epsilon=\epsilon(n,\epsilon_1)>0$ such that     
		\begin{align}
			\left||\nabla_xd_{\partial X}|-1\right|\le 10\epsilon
			\label{t:adj-angle.e7.3.0}
		\end{align}
		holds for every $x\in \left(\partial \widetilde{W}^{m+1}\cap Y\cap \partial X\right)\setminus \mathcal{S}^{n-2}_{\epsilon_1}(X)$. Moreover, (\ref{t:adj-angle.e7.3.0}) holds for $\mathcal{H}^{k-2}$-almost every $x\in \partial \widetilde{W}^{m+1}\cap Y\cap \partial X$.
	\end{enumerate}
	
	Let $x\in \partial \widetilde{W}^{m+1}\cap \partial X$. Then $B_r(x)$ is $2\epsilon$-splitting along $\nabla_x f_{m+1}$ for small $r>0$. Statement (1) follows from the fact that $\nabla_xd_{\partial X}$ is $\epsilon$-orthogonal to any splitting direction of $B_r(x)$. 
	
	For $x\notin \mathcal{S}^{n-2}_{\epsilon_1}(X)$, we know that $B_r(x)$ is $(n-1,\epsilon)$-splitting for small $r>0$ and the splitting directions are all $\epsilon$-tangent to $\partial X$.
	This implies that $\left||\nabla_xd_{\partial X}|-1\right|\le 10\epsilon$. The second statement in (2) follows from Lemma \ref{l:intrinsic singular est} (i): 
	\begin{align}
		\mathcal{H}^{k-3}\left(\mathcal{S}^{n-2}_{\epsilon_1}(X)\cap\left(\partial \widetilde{W}^{m+1}\cap Y\right)\right)<C(n,\epsilon).
	\end{align} 
	
	By Claims (1) and (2), for $\mathcal{H}^{k-2}$-almost every $x\in \partial \widetilde{W}^{m+1}\cap Y\cap \partial X$, we have 
	\begin{align}
		\left\langle \nabla_x g, \nabla_xd_{\partial X}\right\rangle
		& \ge -\left|\left\langle \nabla_x f_{m+1}, \nabla_xd_{\partial X} \right\rangle\right|
		+ \left\langle \nabla_xg - \nabla_x f_{m+1}, \nabla_xd_{\partial X} \right\rangle
		\notag\\     
		& \ge -\tilde{\epsilon} + 30\tilde{\epsilon}\sum_{i=1}^m \left\langle \nabla_x \rho_i, \nabla_xd_{\partial X}\right\rangle + 10\tilde{\epsilon}\left\langle \nabla_xd_{\partial X}, \nabla_xd_{\partial X}\right\rangle
		\notag\\
		&\ge -\tilde{\epsilon} - 30\tilde{\epsilon}(m\cdot 4\tilde{\epsilon}) + 10\tilde{\epsilon} (1-30\epsilon)
		\notag\\
		&\ge 5\tilde{\epsilon},
		\label{t:adj-angle.e7.3}
	\end{align}
	and  
	\begin{align}
		\left\langle \nabla_x g, \nabla_xd_{\partial X}\right\rangle
		& \le \left|\left\langle \nabla_x f_{m+1}, \nabla_xd_{\partial X} \right\rangle\right|
		+ \left\langle \nabla_xg - \nabla_x f_{m+1}, \nabla_xd_{\partial X} \right\rangle
		\notag\\     
		& \le \tilde{\epsilon} + 30\tilde{\epsilon}\sum_{i=1}^m \left\langle \nabla_x \rho_i, \nabla_xd_{\partial X}\right\rangle + 10\tilde{\epsilon}\left\langle \nabla_xd_{\partial X}, \nabla_xd_{\partial X}\right\rangle
		\notag\\
		&\le \tilde{\epsilon} + 30\tilde{\epsilon}(m\cdot 4\tilde{\epsilon}) + 10\tilde{\epsilon} (1+30\epsilon)
		\notag\\
		&\le 20\tilde{\epsilon}.
		\label{t:adj-angle.e7.4}
	\end{align}
\end{proof}

Combining Lemma \ref{t:cov-frame} and Lemma \ref{l:frame.app.cute}, we establish the following a.e.-acute frame covering theorem. 

\begin{theorem}[a.e.-acute Frame Covering]\label{t:cov-angle}
	Let $\left\{\partial W^1_{t_1}, \dots,\partial W^m_{t_m}\right\}$ be an a.e.-acute $(m,\epsilon)$-frame in $B_1$, where $1\le m <n$. There exists a covering of the corner space $\displaystyle Y=\bigcap_{i=1}^m\partial W^i_{t_i}$ by $\epsilon$-annuli $\mathcal{C}=\{W^{m+1,\ell}=W_{a_\ell}^{b_\ell}(q_\ell)\}$ with $q_\ell\in Y$ for each $\ell$, such that the following hold for every $1\le \ell\le |\mathcal{C}|$ and $a_\ell\le s_\ell\le b_\ell$:
	\begin{enumerate}
		\renewcommand{\labelenumi}{(\roman{enumi})}
		\item $|\mathcal{C}|\le N(n,\epsilon)$;
		\item $\left\{\partial W^1_{t_1}, \dots,\partial W^m_{t_m}, \partial W^{m+1,\ell}_{s_\ell}\right\}$ is an a.e.-acute $(m+1,\epsilon)$-frame.
	\end{enumerate}
\end{theorem}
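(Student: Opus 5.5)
The plan is to combine the two preceding results in order: first produce an orthogonal covering with Lemma \ref{t:cov-frame}, then make each extended frame a.e.-acute with Lemma \ref{l:frame.app.cute}, keeping the first $m$ defining functions unchanged.

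\textbf{Step 1 (orthogonal covering).} Start from the given a.e.-acute $(m,\epsilon)$-frame $\{\partial W^1_{t_1},\dots,\partial W^m_{t_m}\}$. An a.e.-acute frame is in particular an $(m,\epsilon)$-orthogonal frame, since the parameters $\delta_1<\delta_2<0$ are of the form $c(n)\epsilon$. So Lemma \ref{t:cov-frame} applies. It gives a covering of $Y$ by $\epsilon$-annuli $W_{a_\ell}^{b_\ell}(q_\ell)$ with $q_\ell\in Y$ and at most $N(n,\epsilon)$ elements. Each extended family $\{\partial W^1_{t_1},\dots,\partial W^m_{t_m},\partial W^{m+1,\ell}_{s_\ell}\}$ is an $(m+1,\epsilon)$-orthogonal frame for every $s_\ell\in[a_\ell,b_\ell]$. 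As in the remark after Lemma \ref{lem:ortho-scale}, I impose $b_\ell\le\epsilon\,\diam(Y)$, which makes the scale separation automatic.

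\textbf{Step 2 (acute perturbation).} For each $\ell$, apply Lemma \ref{l:frame.app.cute} to this $(m+1)$-frame with $k=m$. Because the first $m$ levels already form an a.e.-acute frame, the lemma lets us keep $g_i=f_i$ for $i\le m$. Then $Y$ itself is unchanged, and only $f_{m+1}$ is replaced by the perturbed function $g$ from \eqref{t:adj-angle.e22}. The new annulus $\widetilde W^{m+1,\ell}$ is again an adjusted $\epsilon$-good scale annulus centered at $q_\ell\in Y$, and the extended frame is a.e.-acute.

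\textbf{Step 3 (recovering the cover).} Property (i) of Lemma \ref{l:frame.app.cute} only gives
\[
W_{(1+\epsilon)a_\ell}^{(1-\epsilon)b_\ell}\subseteq\widetilde W_{a_\ell}^{b_\ell}.
\]
So the perturbed annuli may miss thin shells near the inner and outer radii. To avoid this, I first enlarge each annulus before perturbing. Replace $[a_\ell,b_\ell]$ by $[(1-2\epsilon)a_\ell,(1+2\epsilon)b_\ell]$, which is allowed because good-scale annuli are stable under such changes: condition \eqref{defn:bad scale.e1} is required on the wider range $[\sigma^6a,\sigma^{-6}b]$. Then perturb. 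The resulting $\widetilde W$ contains the original $W_{a_\ell}^{b_\ell}$, so the new family still covers $Y$ and its cardinality is unchanged, giving (i).

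\textbf{Main obstacle.} The delicate point is acuteness holding for \emph{every} level $s_\ell$ in the range, not just one. The estimates \eqref{t:adj-angle.e7.2}--\eqref{t:adj-angle.e7.4} are pointwise on $\partial\widetilde W^{m+1}_{s}\cap Y$ and uniform in $s$. The boundary condition, however, holds only off $\cS^{n-2}_{\epsilon_1}(X)$, and the almost-everywhere statement comes from the Hausdorff measure bound in Lemma \ref{l:intrinsic singular est}(i). I will apply that bound separately to each corner space $Y\cap\partial\widetilde W^{m+1}_{s}$, which is legitimate because each is an $(m+1,\epsilon)$-corner space. This yields $\mathcal H^{k-2}$-a.e. acuteness on $Y\cap\partial\widetilde W^{m+1}_s\cap\partial X$ for every $s$, which is (ii).
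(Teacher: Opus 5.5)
Your proposal follows the paper's proof, which obtains the theorem by combining Lemma \ref{t:cov-frame} with Lemma \ref{l:frame.app.cute}. Like the paper, you use the latter's final clause to keep $f_1,\dots,f_m$ (hence $Y$) fixed and perturb only the new defining function.

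Your Step 3 and your slice-by-slice use of Lemma \ref{l:intrinsic singular est} spell out points the paper leaves implicit. In Step 3, enlarging each annulus costs a marginally larger $\sigma$, but it does keep the perturbed annuli covering $Y$.

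One thing to add concerns the old $f_j$. The slice-wise argument needs their boundary condition to hold at every point of $\partial X$ off $\cS^{n-2}_{\epsilon_1}(X)$; this is true for frames built by Lemma \ref{l:frame.app.cute}. By itself, $\cH^{n-m-1}$-a.e.\ acuteness on $Y\cap\partial X$ says nothing on the $\cH^{n-m-1}$-null slices $Y\cap\partial\widetilde{W}^{m+1}_s\cap\partial X$.
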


\section{Integration over $\epsilon$-Frames}

By the assumptions of \hyperlink{thm:A}{Theorem A}, $M=B_2(p)\setminus\cS_\eta(X)$ is a smooth manifold with smooth boundary. We choose $\epsilon=\epsilon(n,\eta)>0$ to be sufficiently small such that Lemma \ref{l:intrinsic singular est} holds. A corner space $Y$, or its corresponding frame $\left\{\partial W^1_{t_1}, \dots,\partial W^m_{t_m}\right\}$, is said to be virtually smooth if every defining function $f_i$ is smooth on $W^i\setminus\cS_\eta(X)$. By the smoothness assumption of $X$, the semi-concave functions defined in Section \ref{sec:frame-covering} can always be perturbed to be smooth on $M$, thereby making the frames virtually smooth \cite{AKP08, GW79}. Therefore, \hyperlink{thm:A}{Theorem A} follows directly from the $m=0$ case of the following theorem.

\begin{theorem}\label{l:int.corner.mfd} 
	Let $\left\{\partial W^1_{t_1}, \dots,\partial W^m_{t_m}\right\}$ be a virtually smooth a.e.-acute $(m,\epsilon)$-frame for $0\le m \le n-2$, and let $\displaystyle Y=\bigcap_{i=1}^m\partial W^i_{t_i}$ be an a.e.-acute $\epsilon$-corner space. Define $K_Y^-(x)=\max\{-\min\{\sec_Y(x)\}, 1\}$ and let $k=\dim(Y)=n-m$. Then,
	\begin{align}
		\int_{Y\cap B_1(p_0)} scal_Y \,\mathrm{d}\mathrm{vol}_Y \le c(n,\epsilon)\cdot\left(\operatorname{diam}(Y)^{k-2}+\int_{Y\cap B_2(p_0)} K_Y^-\right).
		\label{l:int.corner.mfd.e0}
	\end{align}
\end{theorem}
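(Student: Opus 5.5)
We argue by downward induction on $m$, i.e.\ by induction on $k=n-m$, in the spirit of Petrunin's argument. The base case is $k=2$. Here $Y$ is a surface with (possibly non-smooth) boundary coming from $\partial X$ and from the singular set, and $scal_Y=2K_Y$. By Gauss--Bonnet, on each component $Z$ of $Y\cap B_1(p_0)$ the integral $\int_Z K_Y$ equals $2\pi\chi(Z)$ minus the total geodesic curvature of $\partial Z$ and minus the exterior angles at corners. Three ingredients should control these terms. First, Theorem \ref{t:cov-angle} and the finiteness of the covering give $C(n,\epsilon)$ components. Second, the interior singular points are absent, since by Lemma \ref{l:intrinsic singular est}(ii) $\dim_{\cH}(\cS_\eta\cap Y\setminus\partial X)\le 0$, and finitely many cone points contribute at most $2\pi$ each. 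Third, the boundary pieces along $\partial X$ have geodesic curvature bounded below: the a.e.-acute condition together with the semiconcavity of $d_{\partial X}$ and of the $f_i$ lets us bound $-\int k_g$ by the length, which is $\le C\,\diam(Y)$ by Corollary \ref{c:vol-frame}(ii). Cutting $Y$ by the sphere $\partial B_1(p_0)$ is handled by a coarea average over radii in $[1,2]$. This averaging is the source of the $\int_{Y\cap B_2}K^-_Y$ term, since the geodesic curvature of the level sets of $d_{p_0}$ can only be controlled on average in terms of the negative part of the curvature.

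For the inductive step, $k\ge3$, we cover $Y$ by the a.e.-acute annuli $W^{m+1,\ell}$ of Theorem \ref{t:cov-angle}; there are at most $N(n,\epsilon)$ of them. On each annulus we foliate $Y\cap W^{m+1,\ell}$ by the corner spaces $Y_s=Y\cap\partial W^{m+1,\ell}_s$. Applying the Gauss equation twice gives
$$scal_Y=scal_{Y_s}+2\Ric_Y(\nu,\nu)-H^2+|\mathrm{II}|^2.$$
We then use the Riccati/Bochner identity for $f=f_{m+1}|_Y$, namely $\Ric(\nu,\nu)=-\nu(H)-|\mathrm{II}|^2$ up to $|\nabla f|$ corrections. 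Integrating over $Y\cap W$ by coarea turns $\int\Ric(\nu,\nu)$ into boundary terms $\pm\int_{Y_{a},Y_b}H$ plus terms in $|\mathrm{II}|^2$ and $H^2$. The $|\mathrm{II}|^2$ terms have a favorable sign: they are bounded via the almost-cone structure, because $\mathrm{Hess} f\le (1+\epsilon)/f$ by semiconcavity and the near-conical condition forces $\mathrm{II}\approx g/s$ in an averaged sense. The mean curvature terms on the level sets $Y_{a_\ell},Y_{b_\ell}$ are bounded by $C\,\cH^{k-1}(Y_s)/s\le C s^{k-2}$, using the upper Hessian bound of $f$ and Corollary \ref{c:vol-frame}(i). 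The two-sided control needed for $|\mathrm{II}|^2$ should come from the lower bound $\sec\ge -1$ of $X$, which makes the second fundamental form of a semiconcave level set bounded below in terms of $K^-$; this produces exactly the $\int K_Y^-$ contribution. Finally we apply the induction hypothesis to each $Y_s$, with $\dim Y_s=k-1$ and $\diam Y_s\lesssim s$, and integrate in $s\in[a_\ell,b_\ell]$. Here $K^-_{Y_s}$ is bounded via Gauss by $K^-_Y+|\mathrm{II}|^2$, which is again absorbed by the previous step. The result is $\int_Y scal_Y\le C\sum_\ell b_\ell^{k-2}+C\int K^-_Y\le C(\diam Y^{k-2}+\int K_Y^-)$.

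The main obstacle is the set of boundary terms where $Y$ meets $\partial X$ and $\cS_\eta(X)$. In the divergence theorem on $Y\cap\{a\le f\le b\}$, the flux of $\nabla f$ through $Y\cap\partial X$ appears. The a.e.-acute condition $\langle\nabla f_j,\nabla d_{\partial X}\rangle\le-\delta_1<0$ is meant to give this flux a sign, and dropping it yields only an upper bound. This requires acuteness at $\cH^{k-1}$-a.e.\ point of $Y\cap\partial X$, and Lemma \ref{l:intrinsic singular est} is used to show that the singular part $\cS^{n-2}_\eta\cap Y$ has codimension $\ge2$ in $Y$ and so carries no flux. I would first verify this sign bookkeeping in the corner spaces (cf.\ \eqref{l:int.corner.mfd.e9.6.5}), since it is exactly where Example \ref{r:eg.bdy} shows that the Alexandrov hypothesis must enter.
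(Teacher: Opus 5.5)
Your outline matches the paper's: induction on $k$, acute annulus covering, Gauss equation, Bochner identity, coarea, and the induction hypothesis on level sets. The gap is the step that controls the second fundamental form. You claim that $\sec\ge-1$ bounds the second fundamental form of a semiconcave level set from below. This is false. A lower curvature bound gives only upper Hessian bounds on $f_i$ and $g$, hence only $k_i\le 6n/t$ on $L_t=Y\cap g^{-1}(t)$. Negative principal curvatures can concentrate with no bound on $\int|\mathrm{II}|^2$. For example, the functions $\rho_i=\min_\alpha\rho_\alpha$ built into $g$ in Lemma \ref{l:frame.app.cute} have concave creases; after smoothing, $\int k^-$ stays bounded but $\int (k^-)^2$ does not. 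An averaged closeness of $\mathrm{II}$ to its cone value gives no $L^2$ control either. So you can estimate neither the $H^2$ part of $G=H^2-|\mathrm{II}|^2$ (it keeps the bad sign after you drop $-|\mathrm{II}|^2$), nor $K^-_{Y_s}\le K^-_Y+|\mathrm{II}|^2$. The paper needs no quadratic control. The Gauss equation gives $G\le scal_{L_t}+n^2K_Y^-$, so $G$ is absorbed into $\int_a^{b'}\int_{L_t}scal_{L_t}$, which the induction hypothesis bounds. The bound $K^-_{L_t}\le K^-_Y+2(H^-+m\max\{k_i,0\})\max\{k_i,0\}$ is linear in the negative curvature, since $\max\{k_i,0\}\le 6n/t$.

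The boundary terms have a matching gap. You bound both level-set terms by $C\,\cH^{k-1}(Y_s)/s$ using the upper Hessian bound. But the outer term enters as $-2\int_{L_b}H$, so it needs a \emph{lower} bound on $\int_{L_b}H$; semiconcavity only gives $\int_{L_a}H\le c\,a^{k-2}$. The missing idea is the first variation of volume. Acuteness signs the contribution of $L_t\cap\partial X$, and Lemma \ref{l:intrinsic singular est}(ii) shows that $\cS_\eta(X)\cap L_t\setminus\partial X$ has codimension at least two in $L_t$. Together these give $\frac{d}{dt}\Vol{L_t}\le c\,t^{k-2}+\frac12\int_{L_t}H$. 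Since $\Vol{L_b}\le c\,b^{k-1}$, some $b'\in[b,2b]$ satisfies $\int_{L_{b'}}H\ge -c\,b^{k-2}$. Integrating the same inequality in $t$ gives $\int_a^{b'}\int_{L_t}H^-\le c\,b^{k-2}$, which is exactly what the linear bound on $K^-_{L_t}$ needs. You use such averaging only for the sphere cut in dimension two.

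Your base case also has two gaps. First, finiteness of the covering does not bound the number of components of $Y$, let alone of $Y\cap B_1(p_0)$; the paper proves this separately from local path-connectedness of $Y$ at scale $t_{n-2}/100$. Second, the geodesic curvature of $Y\cap\partial X$ in $Y$ is, up to a positive factor, $-\mathrm{Hess}\,d_{\partial X}(v,v)+\sum_i c_i\,\mathrm{Hess}f_i(v,v)$ with $c_i\approx\langle\nabla f_i,\nabla d_{\partial X}\rangle>0$ under the paper's acuteness convention. So semiconcavity of $f_i$ bounds it from the wrong side. The paper avoids both the sphere cut and this curvature. It runs the same annulus-by-annulus argument with the two-dimensional Bochner identity, whose $\partial X$ term $-\int k_g\langle u,\vec n\rangle$ involves only the level curves' $k_g\le C/t$ and $\langle u,\vec n\rangle\le0$.
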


\begin{proof}
	We proceed by induction on $k=\dim(Y)=n-m$ to prove (\ref{l:int.corner.mfd.e0}). Assume $k\ge 3$. We will defer the case of $k=2$ until later, as its proof is largely similar to that of the $k\ge 3$ case. We denote $Y\cap B_r(p_0)$ by $Y_r$, and simply write $Y_1$ as $Y$. 
	
	\subsection{Basic setup and estimates}
	
	By Theorem \ref{t:cov-angle}, there exists an adjusted $\epsilon$-annulus covering 
	$$\mathcal{C} =\left\{W^{m+1,\ell}=\left(W_{a_\ell}^{b_\ell}(p_\ell), g_\ell\right), \ell = 1,2,\dots,C(n,\epsilon)\right\}$$ 
	of $Y$ with $p_\ell\in Y$, such that $\left\{\partial W^1_{t_1}, \dots,\partial W^m_{t_m}, \partial W^{m+1,\ell}_t=g_\ell^{-1}(t)\right\}$ is an a.e.-acute $\displaystyle \left(m+1,\, \epsilon\right)$-frame. For simplicity, we fix $\ell$ and denote $p_\ell$ by $p$, $a_\ell$ by $a$, $b_\ell$ by $b$, and $g_\ell$ by $g$. Let $L_t=Y\cap g^{-1}(t)=Y\cap\partial W^{m+1,\ell}_t$ be an $(m+1)$-th stratum of the corner space. By the smoothness assumption of $X$, the semi-concave defining function $g$ can be perturbed to be smooth on $M$, thereby making $L_t$ virtually smooth \cite{AKP08, GW79}. 
	
	The intrinsic scalar curvature $scal_{L_t}(x)$ and the extrinsic scalar curvature $scal_{Y}(x)$ are related by the Gauss–Codazzi formula:
	\begin{align}
		scal_{L_t}
		&=\sum_{i\neq j}\sec_{L_t}(e_i\wedge e_j)
		\notag \\ &= \sum_{i\neq j}\sec_Y(e_i\wedge e_j) +G
		\label{l:int.corner.mfd.e7.1} \\
		&=scal_{Y}-2\Ric_Y(u,u)+G,
		\label{l:int.corner.mfd.e7}
	\end{align}
	where $k_i(x)$ are the principal curvatures of $L_t$ at $x$ in $Y$, $G(x)=\sum_{i\neq j}k_i(x)k_j(x)$, and $u=\frac{\nabla_xg}{|\nabla_xg|}$ is the unit normal vector.

Let $b'\in[b,2b]$ be a parameter to be determined later. Integrating (\ref{l:int.corner.mfd.e7}) over $Y\cap g^{-1}[a,b']$ and applying the coarea formula, we obtain 
\begin{align}
	\int_{Y\cap \,g^{-1}[a,b']}scal_{Y}=\int_{a}^{b'}\int_{L_t}\frac{scal_{L_t}}{|\nabla_xg|}+\int_{Y\cap \,g^{-1}[a,b']}(2\Ric_Y(u,u)-G).
	\label{l:int.corner.mfd.e7.5}
\end{align}

We use the boundary version of the Bochner formula (see \cite{Pet09} for the version without boundary):
\begin{align}
	\int_{Y\cap \,g^{-1}[a,b']}\Ric_Y(u,u)=\int_{Y\cap \,g^{-1}[a,b']}G+\int_{L_{a}}H-\int_{L_{b'}}H - \int_{\partial X\cap Y} H\left\langle u, \overset\rightarrow{n} \right\rangle,
	\label{l:int.corner.mfd.e8}
\end{align}
where $H(x)=\sum k_i(x)$ is the mean curvature of $L_t$ in $Y$, and $\overset\rightarrow{n}$ is the outward unit normal vector of $\partial X\cap Y$ in $Y$. Substituting (\ref{l:int.corner.mfd.e8}) into (\ref{l:int.corner.mfd.e7.5}), we get 
\begin{align}
	\int_{Y\cap \,g^{-1}[a,b']}scal_{Y}
	& =\int_{a}^{b'}\int_{L_t}\frac{scal_{L_t}}{|\nabla_x g|}+\int_{Y\cap \,g^{-1}[a,b']}G
	\notag \\
	&+2\int_{L_{a}}H-2\int_{L_{b'}}H
	- 2\int_{\partial X\cap Y} H\left\langle u, \overset\rightarrow{n} \right\rangle.
	\label{l:int.corner.mfd.e9}
\end{align}

We will bound the right-hand side in terms of $K_Y^-$, using the good scale structure on $W_{a}^{b}(p)$. First, by (\ref{l:int.corner.mfd.e7.1}), we have 
\begin{align}
	G \le scal_{L_t}+n^2 K_Y^-.
	\label{l:int.corner.mfd.e9.5}
\end{align}
Define $scal_{L_t}^\pm(x)=\max\{\pm scal_{L_t}(x),0\}$. It is straightforward to see that 
\begin{align}
	\int_{L_t}\frac{scal_{L_t}}{|\nabla_x g|} 
	&= \int_{L_t}\frac{scal_{L_t}^+ - scal_{L_t}^- }{|\nabla_x g|} 
	\le 2\int_{L_t}scal_{L_t}^+ - \frac{1}{2} \int_{L_t}scal_{L_t}^-
	\notag \\
	& = 2\int_{L_t}scal_{L_t} + \frac{3}{2} \int_{L_t}scal_{L_t}^-.
\end{align}
Therefore, 
\begin{align}
	\int_{Y\cap \,g^{-1}[a,b']}scal_{Y}
	&\le 3\int_{a}^{b'}\int_{L_t}scal_{L_t}+2n^2\int_a^{b'}\int_{L_t} K_{L_t}^- 
	+m^2\int_{g^{-1}[a,b']}K_Y^-
	\notag \\
	&+2\int_{L_{a}}H-2\int_{L_{b'}}H - 2\int_{\partial X\cap Y} H\left\langle u, \overset\rightarrow{n} \right\rangle.
	\label{l:int.corner.mfd.e9.1}
\end{align}

\subsection{Upper bound of principal curvatures}

To estimate the principal curvatures $k_i$, we start with the following formula. 
Let $h$ be a function and $v$ be a vector tangent to $L_t$ at $x$. Then 
\begin{align}
	\operatorname{Hess}^Y_h(v,v)
	= \operatorname{Hess}^M_h(v,v) - \left\langle \RNum{2}^M_Y(v,v), \nabla^M_x h\right\rangle,
	\label{l:int.corner.mfd.e20}
\end{align}
where $\RNum{2}$ is defined with respect to the outward normal direction. Let $h=f_i$ be a defining function for $Y$. Then $\operatorname{Hess}^Y_h(v,v)=0$, and the components of $\RNum{2}^M_Y(v,v)$ are given by
\begin{align}
	\left\langle \RNum{2}^M_Y(v,v), \nabla^M_x f_i\right\rangle = \operatorname{Hess}^M_{f_i}(v,v).
	\label{l:int.corner.mfd.e20.1}
\end{align}
Substituting this into (\ref{l:int.corner.mfd.e20}), we obtain the Hessian decomposition formula:
\begin{align}
	\operatorname{Hess}^Y_h(v,v)
	= \operatorname{Hess}^M_h(v,v) - \sum_{i=1}^m \operatorname{Hess}^M_{f_i}(v,v)\cdot \left\langle \frac{\nabla^M_x f_i}{|\nabla^M_x f_i|^2}, \nabla^M_x h\right\rangle.
	\label{l:int.corner.mfd.e20.2}
\end{align}
Let $h=g$ and $u=\nabla_xg/|\nabla_x g|$. By (\ref{l:int.corner.mfd.e20.2}), we have
\begin{align}
	\left\langle \nabla_v u, v\right\rangle
	& = \frac{\left\langle \nabla_v \nabla_x g, v\right\rangle}{|\nabla_x g|}
	= \frac{\operatorname{Hess}^Y_g(v,v)}{|\nabla_x g|}
	\notag\\
	& = \frac{\operatorname{Hess}^M_g(v,v)}{|\nabla_x g|}
	- \sum_{i=1}^m \operatorname{Hess}^M_{f_i}(v,v)\cdot  \left\langle \frac{\nabla^M_x f_i}{|\nabla^M_x f_i|^2}, \frac{\nabla^M_x g}{|\nabla^M_x g|}\right\rangle.
	\label{l:int.corner.mfd.e20.3}
\end{align}

Note that 
\begin{enumerate}
	\item any function $h\in\{f_i, g\}$ is $(5/h)$-concave and $\epsilon h$-close to the corresponding distance function;
	\item $-1\le \left\langle \nabla^M_x f_i, \nabla^M_x g\right\rangle<0$.
\end{enumerate}
We have 
\begin{align}
	\left\langle \nabla_v u, v\right\rangle \le \frac{6n}{t}.
	\label{l:int.corner.mfd.e22}
\end{align}
Then
\begin{align}
	k_i\le\frac{6n}{t} 
	\label{l:int.corner.mfd.e23}
\end{align} 
and
\begin{align}
	H(x)\le \frac{6n^2}{t},
	\label{l:int.corner.mfd.e23-m}
\end{align} 
for a smooth point $x\in L_t$. By Corollary \ref{c:vol-frame},
\begin{align}
	\int_{L_a} H \le c(n,\epsilon)\cdot a^{k-2}.
	\label{l:int.corner.mfd.e23-m.2}
\end{align}

Furthermore, for a smooth point $x\in\partial X\cap Y$, note that $\overset\rightarrow{n} = \overset\rightarrow{N}/|\overset\rightarrow{N}|$, where 
\begin{align}
	\overset\rightarrow{N}
	= \sum_{i=1}^m \frac{\left\langle\nabla d_{\partial X}, \nabla f_i \right\rangle}{|\nabla f_i|^2}\cdot \nabla f_i - \nabla d_{\partial X}.
\end{align}
For $\cH^{k-2}$-almost every $x\in \partial X\cap Y$, 
\begin{align}
	\left\langle u, \overset\rightarrow{N} \right\rangle
	& = \sum_{i=1}^m \frac{\left\langle\nabla d_{\partial X}, \nabla f_i \right\rangle}{|\nabla f_i|^2}\cdot \frac{\left\langle \nabla g, \nabla f_i \right\rangle}{|\nabla g|} - \frac{\left\langle \nabla g,\nabla d_{\partial X}\right\rangle}{|\nabla g|} \le 0
\end{align}
due to the definition of the a.e.-acute frame. Define $H^\pm(x)=\max\{\pm H(x),0\}$. Together with (\ref{l:int.corner.mfd.e23-m}) and Corollary \ref{c:vol-frame}, we have 
\begin{align}
	-\int_{\partial X\cap Y} H\left\langle u, \overset\rightarrow{n} \right\rangle \le \int_{\partial X\cap Y} H^+\le c(n,\epsilon)\cdot b^{k-2}.
	\label{l:int.corner.mfd.e23-m.3}
\end{align}

Substituting (\ref{l:int.corner.mfd.e23-m.2}) and (\ref{l:int.corner.mfd.e23-m.3}) into (\ref{l:int.corner.mfd.e9.1}), we obtain 
\begin{align}
	\int_{Y\cap \,g^{-1}[a,b']}scal_{Y}
	&\le 3\int_{a}^{b'}\int_{L_t}scal_{L_t}+2n^2\int_a^{b'}\int_{L_t} K_{L_t}^- \notag \\
	&+ m^2\int_{g^{-1}[a,b']}K_Y^-+c(n,\epsilon)\cdot b^{k-2}-2\int_{L_{b'}}H.
	\label{l:int.corner.mfd.e9.6}
\end{align}

\subsection{Lower bound of mean curvature integral}

We now seek a value $b'\in[b,2b]$ such that the integral $\dsp\int_{L_{b'}}H$ is bounded from below. By Lemma \ref{l:intrinsic singular est} (ii), we have 
\begin{align}
	\dim_\cH\left(\cS(X) \cap L_t\setminus\partial X\right)\le \dim_{\cH}(L_t) -2.
\end{align} 
Since $B_2(p_0)\setminus \cS_{\eta}(X)$ is smooth, it follows from the preceding estimate that the only effective boundary term in the first variation formula for $\Vol{L_t}$ is the integral over $L_t\cap \partial X$:
\begin{align}
	\frac{d(\Vol{L_t})}{dt}
	&=\int_{L_t}\frac{H}{|\nabla_x g|}
	+\int_{\partial L_t\cup(\cS_\eta(X)\cap L_t)} \frac{1}{|\nabla_xg|\sin\theta}\left\langle V,\, \frac{\nabla_xg}{|\nabla_xg|}\right\rangle \notag
	\\
	&=\int_{L_t}\frac{H}{|\nabla_x g|}
	+\int_{L_t\cap \,\partial X} \frac{1}{|\nabla_xg|\sin\theta}\left\langle V,\, \frac{\nabla_xg}{|\nabla_xg|}\right\rangle.
	\label{l:int.corner.mfd.e9.6.5}
\end{align}
Here, $\theta$ denotes the angle of intersection between $g^{-1}(t)$ and $\partial X$, which is close to $\frac\pi2$ almost everywhere. The vector field $V$ is given by the linear combination 
$$V=-c_0\cdot \frac{\nabla_xd_{\partial X}}{|\nabla_xd_{\partial X}|} + \sum_{i=1}^m c_i\cdot \frac{\nabla_xf_i}{|\nabla_xf_i|},
$$
where the coefficients satisfy $0<c_i\le 1$. By Corollary \ref{c:vol-frame}, we have $\cH^{k-2}(L_t\cap \,\partial X)\le c(n,\epsilon)\cdot t^{k-2}$. Using the decomposition $H=H^+-H^-$ and the pointwise upper bound $H(x)\le \frac{6n^2}{t}$ from \eqref{l:int.corner.mfd.e23-m}, we obtain 
\begin{align}
	\frac{d(\Vol{L_t})}{dt}
	&\le c(n,\epsilon)\cdot t^{k-2} + \int_{L_t}\frac{H}{|\nabla_x g|}
	\notag \\
	&\le c(n,\epsilon)\cdot t^{k-2} + 2\int_{L_t}H^++\frac{1}{2}\int_{L_t}(-H^-)
	\notag \\
	&\le c(n,\epsilon)\cdot t^{k-2}+\frac{1}{2}\int_{L_t}H.
	\label{l:int.corner.mfd.e9.6.6}
\end{align}
By Corollary \ref{c:vol-frame}, we have $\Vol{L_b}\le c(n,\epsilon)b^{k-1}$. Therefore, 
$$\Vol{L_{2b}}-\Vol{L_b}\ge -c(n,\epsilon)b^{k-1}.$$
This implies that there exists a $b'\in[b,2b]$ such that
$$\frac{d(\Vol{L_t})}{dt}\bigg\vert_{t=b'}\ge -c(n,\epsilon)\cdot b^{k-2}.
$$
Combining this with \eqref{l:int.corner.mfd.e9.6.6}, we deduce that
\begin{align}
	\int_{L_{b'}}H\ge -c(n,\epsilon)\cdot b^{k-2}.
	\label{l:int.corner.mfd.e9.7}
\end{align} 
Substituting this result into \eqref{l:int.corner.mfd.e9.6}, we arrive at 
\begin{align}
	&\int_{Y\cap \,g^{-1}[a,b']}scal_{Y} 
	\notag \\
	&\quad \le 3\int_{a}^{b'}\int_{L_t}scal_{L_t}+2n^2\int_a^{b'}\int_{L_t} K_{L_t}^-+c(n) \cdot \int_{g^{-1}[a,b']}K_Y^-+c(n,\epsilon)\cdot b^{k-2},
	\label{l:int.corner.mfd.e11}
\end{align}
for some $b'\in[b,2b]$.

\subsection{Summing up over the covering}

Applying the inductive hypothesis to the a.e.-acute $\left(m+1,\,\epsilon\right)$-frame $\left\{\partial W^1_{t_1}, \dots,\partial W^m_{t_m}, \partial W^{m+1,\ell}_t\right\}$, we have
\begin{align}
	&\int_{L_t}scal_{L_t} 
	\le  c\left(n,\epsilon\right)\cdot\left(\diam(L_t)^{k-3}+\int_{L_t\cap B_2(p_0)} K_{L_t}^-\right),
	\label{l:int.corner.mfd.e5}
\end{align}
where $K_{L_t}^-(x)=\max\{-\min\{\sec_{L_t}(x)\},1\}$. Integrating \eqref{l:int.corner.mfd.e5} over $t\in[a,b']\subseteq[a,2b]$ yields 
\begin{align}
	&\int_{a}^{b'}\int_{L_t}scal_{L_t}dt 
	\le  c\left(n,\epsilon\right)\cdot \left(b^{k-2}+\int_a^{b'}\int_{L_t\cap B_2(p_0)} K_{L_t}^-\right).
	\label{l:int.corner.mfd.e6}
\end{align}
Consequently, by \eqref{l:int.corner.mfd.e11},
\begin{align}
	\int_{Y\cap\, g^{-1}[a,b']}scal_{Y}&\le c\left(n,\epsilon\right)\cdot \left(b^{k-2}+\int_a^{b'}\int_{L_t\cap B_2(p_0)} K_{L_t}^-+\int_{g^{-1}[a,b']}K_Y^-\right),
	\label{l:int.corner.mfd.e11.1}
\end{align}
for some $b'\in[b,2b]$.

It remains to establish an upper bound for $\dsp\int_a^{b'}\int_{L_t\cap B_2(p_0)} K_{L_t}^-$ in terms of $\dsp\int_{Y_2\cap\, g^{-1}[a,b']} K_Y^-$. Applying the Gauss–Codazzi formula once more, 
\begin{align}
	K_{L_t}^-\le K_{Y}^-+2\cdot(H^-+m\cdot\max\{k_i,0\})\cdot \max\{k_i,0\}.
	\label{l:int.corner.mfd.e6.1}
\end{align}
Since $\big||\nabla_x g|-1\big|<10\epsilon$, we obtain 
\begin{align}
	\int_{L_t} H^-
	&\le 2\int_{L_t}\frac{H^-}{|\nabla_x g|}
	= 2\int_{L_t}\frac{H^+}{|\nabla_x g|}
	-2\int_{L_t}\frac{H}{|\nabla_x g|}
	\notag \\
	&\le c(n,\epsilon)\cdot t^{k-2}-8\cdot \frac{d(\Vol{L_t})}{dt}.
\end{align}
The final inequality follows from \eqref{l:int.corner.mfd.e23} and \eqref{l:int.corner.mfd.e9.6.6} for $t\in[a,2b]$. Integrating this with respect to $t$, we find
\begin{align}
	\int_{Y_2\cap\, g^{-1}[a,b']} (H^-\cdot\max\{k_i,0\})
	\le c(n)\cdot b^{k-2}.
\end{align}
Combining this result with \eqref{l:int.corner.mfd.e6.1}, we obtain
\begin{align}
	\int_a^{b'}\int_{L_t\cap B_2(p_0)}K_{L_t}^-\le \int_{Y_2\cap\, g^{-1}[a,b']}K_{Y}^-+c(n)\cdot b^{k-2}.
\end{align}
Substituting this into \eqref{l:int.corner.mfd.e11.1}, we have
\begin{align}
	&\int_{Y\cap \,g^{-1}[a,b']}scal_{Y} 
	\le  c\left(n,\epsilon\right)\cdot \left(b^{k-2}+\int_{Y_2\cap\, g^{-1}[a,b']} K_{Y}^-\right).
\end{align}

Given that $scal_Y(x)\ge -m^2\cdot K_Y^-(x)$ and $b'\in[b,2b]$, we deduce that 
\begin{align}
	\int_{Y\cap \,g^{-1}[a,b]}scal_{Y} 
	&\le  2\cdot c\left(n,\epsilon\right)\cdot \left(b^{k-2}+\int_{Y_2\cap\, g^{-1}[a,b']} K_{Y}^-\right)
	\notag \\
	&\le 2\cdot c\left(n,\epsilon\right)\cdot \left(b^{k-2}+\int_{Y_2\cap\, g^{-1}[a,2b]} K_{Y}^-\right).
\end{align}
Summing the above inequality over all $\ell=1,2,\dots,|\mathcal C|$, we conclude 
\begin{align}
	\int_{Y}scal_{Y} 
	&\le c\left(n,\epsilon\right)\cdot \left(b^{k-2}+\int_{Y_2} K_{Y}^-\right).
	\label{l:int.corner.mfd.e50}
\end{align}

\subsection{The case of dimension 2}

The 2-dimensional case follows directly from the 2D reduction of the Bochner formula \eqref{l:int.corner.mfd.e8} and the finite number of connected components of the corner spaces. Suppose $m=n-2$ and $\dim(Y)=2$. We cover $Y$ in the same manner as above. Then for any connected component $Y^{(i)}$ of $Y$, since $\Ric_Y(u,u) = K_Y$, the cross term $G=0$, and the mean curvature $H$ reduces to the geodesic curvature $k_g$, formula \eqref{l:int.corner.mfd.e8} becomes:
\begin{align}
	\int_{Y^{(i)}\cap \,g^{-1}[a,b']} K_Y=\int_{L_{a}}k_g-\int_{L_{b'}}k_g - \int_{\partial X\cap Y^{(i)}} k_g\left\langle u, \overset\rightarrow{n} \right\rangle.
	\label{l:int.corner.mfd.e60}
\end{align}
Here $k_g$, replacing the mean curvature $H$, denotes the geodesic curvature of $L_t=Y\cap g^{-1}(t)$ in $Y$. Using the same estimates as in \eqref{l:int.corner.mfd.e23-m.2}, \eqref{l:int.corner.mfd.e23-m.3}, and \eqref{l:int.corner.mfd.e9.7}, we obtain 
\begin{align}
	\int_{Y^{(i)}\cap \,g^{-1}[a,b']} K_Y\le c(n,\epsilon).
	\label{l:int.corner.mfd.e61}
\end{align}
Summing this over the good scale annulus covering, we obtain 
\begin{align}
	\int_{Y^{(i)}} K_Y\le c(n,\epsilon).
	\label{l:int.corner.mfd.e62}
\end{align}

We now show that $Y$ has at most $c(n)$ connected components. This follows from the fact that $Y$ is $\frac{1}{100}\diam(Y)$-connected. That is, for any $z\in Y$, the intersection $B_{t_{n-2}/100}(z)\cap Y$ is path-connected in $Y$. Let $p_i$ be the center of $W^i$. For any $x, y\in B_{t_{n-2}/100}(z)\cap Y$, let $\gamma$ be a geodesic in $X$ connecting them. By the almost metric cone structure, we have $\gamma\subset A_{\frac12t_{n-2}}^{2t_{n-2}}(p_i)$ for each $1\le i\le n-2$. Note that $\displaystyle\bigcap_{i=1}^{n-2}A_{\frac12t_{n-2}}^{2t_{n-2}}(p_i)$ is homeomorphic to $Y\times [\frac12 t_{n-2}, 2t_{n-2}]^{n-2}$. The geodesic $\displaystyle\gamma\subset \bigcap_{i=1}^{n-2}A_{\frac12t_{n-2}}^{2t_{n-2}}(p_i)$ can be deformed into a continuous curve connecting $x$ and $y$ in $Y$. 

\end{proof}

\section{Examples of Fractal Curvature Distribution}

In this section, we prove Theorem \ref{thm:B} by construction.

Let $\mathcal{K} \subset [-0.5, 0.5]$ be a perfect set. Using standard constructions, we can define a continuous, monotonically increasing, and surjective map $\phi: [-0.5,0.5] \to [0,1]$ that is locally constant on $[-0.5, 0.5] \setminus \mathcal{K}$ and non-differentiable on $\mathcal{K}$. We define a continuous non-increasing function $f: \mathbb{R} \to [-1, 1]$ as follows:
\begin{equation}
	f(x) = 
	\begin{cases} 
		1 & x < -0.5 \\
		1 - 2\phi(x) & x \in [-0.5, 0.5] \\
		-1 & x > 0.5.
	\end{cases}
\end{equation}
Note that $f(x)$ is locally constant on $\mathbb{R} \setminus \mathcal{K}$. Let $g(x) = \int_0^x f(t) \, dt$. Then $g(x)$ possesses the following properties:
\begin{enumerate}
	\renewcommand{\labelenumi}{(\arabic{enumi})}
	\setlength{\itemsep}{1pt}
	\item $g$ is concave because its derivative $f$ is non-increasing.
	\item $g$ is locally linear on $\mathbb{R} \setminus \mathcal{K}$.
	\item $g(x)$ is linear with a slope of $1$ on $(-\infty, -0.5)$ and a slope of $-1$ on $(0.5, \infty)$.
\end{enumerate}

Let $F: \mathbb{R}^2 \to \mathbb{R}$ be the concave function defined by $F(x, y) = g(x) + g(y)$, and define the surface 
$$\Sigma = \{(x,y, F(x,y))\}$$ 
as the graph of $F(x, y)$. Let $X=\Sigma\cap \{z\ge -1\}\in\Alex^2(0)$ be the desired Alexandrov space. The boundary $\partial X$ corresponds to the level set $g(x) + g(y) = -1$. At any point $(x,y, F(x,y))$ where $x \notin \mathcal{K}$ or $y \notin \mathcal{K}$, $X$ is locally isometric to the product of a continuous curve and a small interval. Consequently, we have the following properties:
\begin{enumerate}
	\renewcommand{\labelenumi}{(\arabic{enumi})}
	\setlength{\itemsep}{1pt}
	\item $X\setminus\cN(X)$ is intrinsically flat.  
	\item $\cN(X)\setminus \partial X=\{(x,y, F(x,y)) \mid x,y \in \mathcal{K}\}$, which is bi-Lipschitz equivalent to $\mathcal{K}\times\mathcal{K}$.
\end{enumerate}
Clearly, $X$ is smoothable, since the concave function $F(x,y)$ is smoothable.

	

We now calculate the curvature measure of $X$ on $\cN(X)\setminus\partial X$ using the Gauss-Bonnet Theorem. Let $a,b\in\mathds R\setminus\mathcal{K}$ with $a< b$, and consider the domain $\Omega_{a,b}=\{(x,y,F(x,y))\colon (x,y)\in (a,b) \times (a,b) \}$. The closure $\overline{\Omega}_{a,b}$ is a curved quadrilateral on the surface with vertices at $A(a,a)$, $B(b,a)$, $C(b,b)$, and $D(a,b)$. Define 
\begin{equation}
	\Psi(u,v) = \arctan\left( \frac{uv}{\sqrt{1+u^2+v^2}} \right).
\end{equation}
Geometrically, $\Psi(u,v)$ captures the intersection angle of the coordinate curves. For the surface $\Sigma$, the angle $\theta$ between the curves $x=\text{const}$ and $y=\text{const}$ satisfies $\cot\theta = \frac{f(x)f(y)}{\sqrt{1+f(x)^2+f(y)^2}}$. By setting $u=f(x)$ and $v=f(y)$, we obtain $\theta = \frac{\pi}{2} - \Psi(u,v)$. Meanwhile, $\Psi(u,v)$ also serves as an explicit antiderivative for the geodesic curvature along these boundary curves. Direct computation shows that the total geodesic curvature along the four sides of $\partial\Omega_{a,b}$ is
\begin{equation}
	\sum_{i=1}^4 I_i = 4\Psi(f(a), f(b)) - 2\Psi(f(a), f(a)) - 2\Psi(f(b), f(b)).
	\label{curv.edge.e30}
\end{equation}
The sum of the exterior angles $\alpha_i$ at the four vertices $A$, $B$, $C$, and $D$ is 
\begin{align}
	\sum_{i=1}^4 \alpha_i &= \alpha_{aa} + \alpha_{ba} + \alpha_{bb} + \alpha_{ab} \notag \\
	&= \left(\frac{\pi}{2} + \Psi(f(a), f(a))\right) + \left(\frac{\pi}{2} - \Psi(f(a), f(b))\right) \notag \\
	&\quad + \left(\frac{\pi}{2} + \Psi(f(b), f(b))\right) + \left(\frac{\pi}{2} - \Psi(f(a), f(b))\right) \notag \\
	&= 2\pi + \Psi(f(a), f(a)) + \Psi(f(b), f(b)) - 2\Psi(f(a), f(b)).
	\label{curv.corner.e30}
\end{align}   
The total boundary curvature is thus
\begin{align}
	\oint_{\partial \Omega_{a,b}} k_g \, ds 
	& =\sum I_i + \sum \alpha_i 
	\\
	&= \big[4\Psi(f(a), f(b)) - 2\Psi(f(a), f(a)) - 2\Psi(f(b), f(b))\big]  
	\notag \\
	& \qquad + \big[ 2\pi + \Psi(f(a), f(a)) + \Psi(f(b), f(b)) - 2\Psi(f(a), f(b)) \big] \notag \\
	&= 2\pi + 2\Psi(f(a), f(b)) - \Psi(f(a), f(a)) - \Psi(f(b), f(b)).
	\label{curv.bdy.e10}
\end{align}

By the Gauss-Bonnet Theorem, the curvature measure is given by
\begin{equation}
	\mu(\Omega_{a,b})=\Psi(f(a), f(a)) + \Psi(f(b), f(b)) - 2\Psi(f(a), f(b)).
	\label{eq:total_curvature}
\end{equation}
In particular, by setting $a=-1$ and $b=1$, we can compute the total curvature on $\cN(X)\setminus\partial X$. In this case, 
\begin{equation}
	f(a) = f(-1) = 1, \quad f(b) = f(1) = -1.
\end{equation}
Substituting these values into the total curvature formula \eqref{eq:total_curvature}, we obtain:
\begin{equation}
	\mu(\Omega_{-1,1}) = \Psi(1, 1) + \Psi(-1, -1) - 2\Psi(1, -1),
\end{equation}
where
\begin{align}
	\Psi(1, 1) &= \arctan\left( \frac{1 \cdot 1}{\sqrt{1+1^2+1^2}} \right) = \arctan\left( \frac{1}{\sqrt{3}} \right) = \frac{\pi}{6}, \\
	\Psi(-1, -1) &= \arctan\left( \frac{(-1)(-1)}{\sqrt{1+(-1)^2+(-1)^2}} \right) = \arctan\left( \frac{1}{\sqrt{3}} \right) = \frac{\pi}{6}, \\
	\Psi(1, -1) &= \arctan\left( \frac{1 \cdot (-1)}{\sqrt{1+1^2+(-1)^2}} \right) = \arctan\left( -\frac{1}{\sqrt{3}} \right) = -\frac{\pi}{6}.
\end{align}
Combining these results, we conclude:
\begin{equation}
	\mu(\cN(X)\setminus \partial X) = \mu(\Omega_{-1,1}) = \frac{\pi}{6} + \frac{\pi}{6} - 2\left( -\frac{\pi}{6} \right) = \frac{2\pi}{3}.
\end{equation}

	We have established that $\mu(\cN(X)\setminus \partial X)>0$. However, this alone is not sufficient to conclude that $\mu|_{\cN(X)\setminus \partial X}$ is equivalent to $\cH^{2\alpha}$, where $\alpha=\dim_{\cH}(\cK)$. Observe that the mixed partial derivative is
	\begin{equation}
		\frac{\partial^2 \Psi}{\partial u \, \partial v} 
		= \frac{\partial}{\partial v} \left( \frac{v}{(1+u^2)\sqrt{1+u^2+v^2}} \right) = \frac{1}{(1+u^2+v^2)^{3/2}}>0.
	\end{equation}
	Geometrically, the term $\frac{1}{(1+u^2+v^2)^{3/2}} \, du \, dv$ coincides exactly with the spherical area element of the Gauss map parameterized by the gradients $(u, v)$. Consequently, the curvature measure can be expressed as a double integral over the gradient space:
	\begin{align}
		\mu(\Omega_{a,b}) & = \int_{f(b)}^{f(a)} \int_{f(b)}^{f(a)} \frac{\partial^2 \Psi}{\partial u \partial v} \, du \, dv
		= \int_{f(b)}^{f(a)} \int_{f(b)}^{f(a)} \frac{1}{(1+u^2+v^2)^{3/2}} \, du \, dv.
	\end{align}
	By the definition of $f(x)$, if the perfect set $\cK$ is Ahlfors regular, we have  
	\begin{equation}
		\mu(\Omega_{a,b}) >0 
		\quad \Longleftrightarrow \quad 
		f(a)-f(b) >0
		\quad \Longleftrightarrow \quad 
		\cH^{\alpha}([a,b]\cap\cK)>0.
		\label{curv.int.e30}
	\end{equation}

\end{document}